\newtheorem{lemma}{Lemma}[section]
\newtheorem{proposition}[lemma]{Proposition}
\newtheorem{theorem}[lemma]{Theorem}
\newtheorem{definition}[lemma]{Definition}
\newcommand{\half}{{\textstyle{1\over2}}}
\newcommand{\R}{\mathbb R}%
\newcommand{\Z}{\mathbb Z}%
\newcommand*{\C}{\mathbb C}%
\newcommand{\g}{\mathfrak{g}}
\newcommand{\Sp}{Sp}
\def\operatorname#1{\mathop{\operator@font #1}\nolimits}%
\newcommand{\Rad}{\mathrm{Rad}\,}%
\DeclareMathOperator{\Tr}{Tr}
\DeclareMathOperator{\id}{Id}
\DeclareMathOperator{\Ker}{Ker}
\DeclareMathOperator{\Image}{Im}
\newcommand{\Span}{\operatorname{Span}}
\DeclareMathOperator{\Id}{Id}
\newcommand*{\cyclic}{\mathop{\kern0.9ex{{+}\kern-2.2ex\raise-.29ex%
      \hbox{\Large\hbox{$\circlearrowright$}}}}\limits}
\DeclareMathAccent{\ring}{\mathalpha}{operators}{"17}
\begin{document}

\markboth{Michel Cahen, Thibaut Grouy and Simone Gutt}
{A possible symplectic framework for Radon-type transforms}

\title{\bf{A possible symplectic framework for Radon-type transforms}}

\author{Michel Cahen$^1$,  Thibaut Grouy and Simone Gutt$^1$\\
 \small{D\'epartment de Math\'ematiques, Universit\'e Libre de Bruxelles}\\ \small{boulevard du triomphe, Campus Plaine CP 218}\\
\small{B-1050 Brussels, Belgium}\\
$^1$\small{Acad\'emie Royale de Belgique}\\
{\scriptsize{mcahen@ulb.ac.be, thibautgrouy@gmail.com, sgutt@ulb.ac.be} }}

\date{}

\maketitle

\begin{abstract}
Our project  is to define Radon-type transforms in symplectic geometry.
The chosen framework consists of symplectic symmetric spaces whose canonical connection is of Ricci-type.
 They can be considered as symplectic analogues of the spaces of constant holomorphic curvature in K\"ahlerian Geometry. They are characterized amongst a class of symplectic manifolds by the existence of many totally geodesic symplectic submanifolds.
We present  a particular class of Radon type tranforms, associating to a smooth compactly supported function on a homogeneous manifold $M$,  a function on a homogeneous  space $N$ of  totally geodesic submanifolds of $M$, and vice versa. We describe some  spaces $M$ and $N$ in such Radon-type duality with $M$  a  model of symplectic symmetric space with  Ricci-type canonical connection and $N$ an orbit of totally geodesic symplectic {submanifolds}.
    \end{abstract}


\section*{Introduction}
The subject of Radon transforms started with results by Funk,
in 1913,  who observed  that a symmetric function on the sphere $S^2$ can be described from its great circle integrals, and by Radon, in 1917, who  showed that a smooth function $f$ on the Euclidean space $\R^3$ can be  determined by its integrals over the planes in $\R^3$: 
if $J(\omega,p)$ is  the integral of $f$ over the plane defined by $x\cdot \omega=p$ for $\omega$  a unit vector, then
$$ f(x)= -\frac{1}{8\pi^2} L_x\Bigl( \int_{S^2} J(\omega,\omega\cdot x )d\omega\Bigr)$$
 where $L$ is the Laplacian.  
This underlines a correspondence between the space $M=\R^3$ and the space of $2$-planes in $\R^3$, $N=Pl(2,\R^3)=\raisebox{0.3mm}{$(S^2\times \R)$}/\raisebox{-0.3mm}{$\Z_2$}$:\\
- the Radon transform associates to  a smooth compactly supported function $f$  on $\R^3$, the function $ \Rad{f}$ on $Pl(2,\R^3)$ defined by
$$
\Rad{f} (S)=\int_{x\in S} f(x) dm(x)\quad  \textrm{ for  any $2$-plane } S;
 $$
 where $dm$ is the Lebesgue measure on $S$.\\
- the dual Radon transform associates to a smooth compactly supported   function $\varphi $  on $Pl(2,\R^3)$, the function $\Rad^*{\varphi} $ on $\R^3$ defined by :
$$
\Rad^*{\varphi} (x)=\int_{S \ni x} \varphi(S) d\mu(S)
$$
where $d\mu$ is the unique normalized measure on $Pl(2,\R^3)$ which is invariant under rotations.\\
 Observing that one can recover a compactly supported function on $\R^3$ from its integral over lines in $\R^3$ led to  applications to X-ray technology and tomography and boosted the interest in this theory. The idea of the Radon transform was widely generalized, constructing correspondances between a class of objects on a space $M$ and a class of objects on a space $N$.

We  consider only Radon-type  transforms between spaces of functions built by integrating functions on totally geodesic submanifolds in a homogeneous framework. This was introduced by Helgason, mostly  in a (pseudo) Riemannian framework, in the  sixties. 
Let us recall that  a connected submanifold $S$ of a  manifold $M$ endowed with a connection $\nabla$ is said to be {\emph {totally geodesic}} if each geodesic in $M$ which is tangent to $S$ at a point lies entirely in $S$.
These submanifolds have been described  for  pseudo-Riemannian space forms $(M,g)$ (endowed with  the Levi Civita connection).
The abstract Radon transform generalizes the original transform,
associating to a smooth compactly supported function $f$ on a homogeneous space $M$ the function $ \Rad f$ {on a homogeneous space $N$ of totally geodesic submanifolds of fixed dimension $p$, whose value at a point $ S \in N$} is given by the integral of $f$ on the corresponding submanifold $S$ 
 \begin{equation} f\rightarrow \Rad{f} (S)=\int_{x \in S} f(x) dm(x).  \end{equation}
The dual abstract Radon transform is :
 \begin{equation}\varphi\rightarrow \Rad^*{\varphi} (x)=\int_{S\supset \{ x\} }\varphi(S) d\mu(S). \end{equation}
The first questions  encountered in constructing such Radon-type transforms deal with finding invariant  {measures} to integrate the functions, relating functions spaces, inverting the transforms, finding a corresponding map
between invariant differential operators, studying the support of $f$ when $\Rad{f}$ has compact support... They have been studied  in the framework of Riemannian spaces of constant curvature,  which provide a rich supply of totally geodesic submanifolds.  The results and the original references on the subject are presented  in  Helgason's nice books \cite{sgbib:Helg1, sgbib:Helgason}. \\

To build such correspondences in a symplectic framework, we choose symplectic symmetric spaces with Ricci-type  canonical connection. We recall their definition in section {\ref{symplectic space forms}}, along with their properties and the construction of models. In section \ref{SymplRadonduality}, we describe  the space of totally geodesic symplectic submanifolds of our models. The corresponding list of homogeneous spaces in symplectic Radon duality was announced  in \cite{sgbib:gutt} when the  Ricci endomorphism of the tangent bundle  squares to a non zero constant multiple of the identity. We extend this list to the case where it squares to zero. \\
In section {\ref{characterization},  we characterize the symplectic locally symmetric spaces with Ricci-type  canonical connection of dimension $\ge8$ as the symplectic manifolds with a symplectic connection $(M,\omega,\nabla)$  having the $2$ following properties :
\begin{enumerate}
\item they admit a  parallel field $A$ of   endomorphisms of the tangent bundle such that  $A_x\in {\mathfrak{sp}}(T_xM,\omega_x)$ and $A^2=\lambda\Id$;
\item  given  any point $x$ and any  $A_x$-stable symplectic subspace $V_x$ of $T_xM$,
 there is a unique maximal totally geodesic symplectic submanifold  passing through $x$ and tangent to $V_x$.
\end{enumerate}
\section{Ricci-type symplectic symmetric spaces   }\label{symplectic space forms}
\subsection{Symplectic connections}
{{A linear connection $\nabla$ on a symplectic manifold $(M,\omega)$  is called
{\emph{symplectic}}}} if the symplectic form $\omega$ is parallel  and 
if its  torsion $T^\nabla$ vanishes.\\
Let us recall that  a symplectic connection exists on any symplectic manifold. Indeed,
there is a canonical projection of any  torsion free connection $\nabla^0$ on a symplectic connection $\nabla$ given as follows : define $N$ by 
$\nabla^0_X\omega (Y,Z)=:\omega(N(X,Y),Z) $ and set
$$\nabla_X Y:=\nabla^0_X Y + \frac{1}{3}N(X,Y)+\frac{1}{3}N(Y,X).$$ 
On the other hand a symplectic connection is far from being unique:
given  $\nabla$ symplectic, the connection 
$\nabla'_X Y:=\nabla_X Y + S(X,Y)$ is symplectic 
if and only if   $\omega(S(X,Y),Z)$ is totally symmetric.

When the manifold is symmetric there is a natural unique connection: it is   the canonical connection for which each symmetry is an affine transformation. In the symplectic context we have precisely:\\
A {\emph{symplectic symmetric space}} is a symplectic manifold $(M,\omega)$ endowed with ``symmetries" i.e. with a smooth map
$$S:M\times M\rightarrow M : (x,y)\mapsto s_xy$$ 
so that, for each $x\in M$, the symmetry $s_x : M\rightarrow M$ is an involutive symplectomorphism  (i.e. $s_x^*\omega=\omega$ and $s_x^2=\Id$), 
with  $x$ an  isolated fixed point (i.e. $s_xx=x$ and $1$ is not an eigenvalue of $(s_x)_{*_x}$), and such that 
$s_xs_ys_x=s_{s_xy}$, for any $x,y\in M$.\\
On  a symplectic symmetric space there is a unique  connection for which each $s_x$ is an affine transformation; it is given by 
\begin{equation}
(\nabla_XY)_x=\half [X-s_{x*}X,Y]_x.
\end{equation}
This connection is automatically symplectic. Symmetric symplectic spaces were introduced in \cite{sgbib:BCG, sgbib:Biel}.

The curvature tensor of a symplectic connection at a point $x$ has the following symmetry properties:
\[
\omega_x (R_x^\nabla(X,Y)Z,T)=-\omega_x (R_x^\nabla(Y,X)Z,T)=\omega_x (R_x^\nabla(X,Y)T,Z) 
\]
and 
\[
\cyclic_{XYZ} R_x^\nabla(X,Y)Z=0 \quad , \cyclic_{XYZ}{\footnotesize { \textrm{ denoting the sum over cyclic permutations of  } {X,Y}\textrm{ and }{Z}}}.
\]
When $dim M=2n\ge 4$, Izu Vaisman \cite{sgbib:Vaisman} showed that the space of tensors having those symmetries splits under the action of the symplectic group into two irreducible components
so that one has a decomposition of the curvature into
\begin{equation}\label{eq:decR}
R^\nabla = W^\nabla + E^\nabla 
\end{equation}
where $W^\nabla$ has no trace and 
\begin{eqnarray}
E^\nabla(X,Y)Z &=& 
{\textstyle{\frac1{2n+2}}}\left(2\omega(X,Y)\rho^\nabla Z
 + \omega(X,Z)\rho^\nabla Y - \omega(Y,Z)\rho^\nabla X\right.\cr
&&\quad \left. + \omega(X,\rho^\nabla Z)Y - \omega(Y,\rho^\nabla Z)X
\right)
\end{eqnarray}
 with  the Ricci tensor $r^\nabla$ (defined by $r^\nabla(X,Y):=\Tr[Z\rightarrow R^\nabla(X,Z)Y]$), which is automatically symmetric,  converted into the Ricci endomorphism $\rho^\nabla$  by
\begin{equation}
\omega(X, \rho^\nabla Y) = r^\nabla(X,Y).
\end{equation}
A symplectic connection for which $W^\nabla=0$ is said to be of {\emph {Ricci-type}}.

 \subsection{Space forms}
Let us recall that a {\emph{Pseudo Riemannian space form} } is a   connected  pseudo Riemannian manifold $( N,\ring g)$ of dimension $n\ge 4$, which is geodesically complete (for the Levi Civita connection), and   which has  {constant sectional curvature} $k$.\\
Its curvature
 $\ring R$ is then given by 
\begin{equation}
\ring g_x(\ring R_x(X,Y)Z,T)= k\left( \ring g_x(X,Z)\ring g_x(Y,T)-\ring g_x(X,T)\ring g_x(Y,Z)\right);
\end{equation}
the space $( N,\ring g)$ is thus locally symmetric ($\ring\nabla \ring R=0$) and the  curvature is  a polynomial in the tensor algebra only
 involving the metric tensor $\ring g$.\\
Riemannian space forms in dimension $n$ are quotients of the Euclidean space $\R^n$, the sphere $S^n$ or the hyperbolic space $H^n$.\\

If a symplectic manifold $(M,\omega)$ is endowed with a symplectic connection whose  curvature is  a polynomial in the tensor algebra only
 involving the symplectic  tensor $\omega$, then the curvature is invariant under the symplectic group and, in view of  the decomposition formula \eqref{eq:decR}, it is identically zero. In the definition of  symplectic space forms, we obviously want to go beyond the flat case, so we consider the K\"ahler case to generalize it.\\

A {\emph{K\"ahler Space form}} is a connected  K\"ahler   manifold $( N,\ring g, \ring J)$ of dimension $n\ge 4$, which is geodesically complete (for the Levi Civita connection) and   which has {constant holomorphic sectional curvature} $k$ (i.e.:
$
\ring g_x(\ring R_x(X,JX)X,JX)=k\left(\ring  g_x(X,X)^2\right) 
$ 
 for all $x\in N$ and for all $ X\in T_x N $). \\
Its curvature is then given by 
\begin{eqnarray}\label{eq:curvkahler}
{\small{\ring g_x(\ring R_x(X,Y)Z,T)}}&=&\small{\frac{k}{4}\Bigl(\ring  g_x(X,Z)\ring g_x(Y,T)-\ring g_x(X,T)\ring g_x(Y,Z)+\ring g_x(X,JZ)\ring g_x(Y,JT)\Bigr.}\nonumber\\
&&
\small{\Bigl.-\ring g_x(X,JT)\ring g_x(Y,JZ) +2\ring g_x(X,JY)\ring g_x(Z,JT)\Bigr)};
\end{eqnarray}
 the space is thus locally symmetric and the  curvature is  a polynomial in the tensor algebra 
 (with catenation) involving only   $\ring g$   and  $J $.\\
 K\"ahler space forms in complex dimension $n$ are all  quotients of $\C^n$, the complex projective space $ \C \mathbb{P}^n$, or the complex hyperbolic space $\C \mathbb{H}^n$.
 
 Remark that the K\"ahler form $\omega_x(X,Y):=\ring g_x(X,JY)$ is symplectic and the Levi Civita connection is symplectic. In symplectic terms,
 formula \eqref{eq:curvkahler} rewrites as
{\small{$
\ring R_x(X,Y)Z=\frac{k}{4}\bigl(-\omega_x(X,JZ)Y+\omega_x(Y,JZ)X-\omega_x(X,Z)JY+\omega_x(Y,Z)JX -2\omega_x(X,Y)JZ\bigr).
$}}
Hence the Levi Civita connection is of Ricci type with Ricci endomorphism equal to a multiple of the complex structure : $\ring\rho=-\frac{k(n+1)}{2}J$.
 
 \begin{definition}
A {\bf{ symplectic space form}}  is a  connected symplectic manifold  endowed with
a symplectic  connection $\nabla$ which is complete, locally symmetric, and such 
 that its curvature is of Ricci type.
 \end{definition}
 The curvature in a symplectic space form is thus given by 
{\begin{eqnarray}\label{eq:RRiccitype}
R^\nabla(X,Y)Z &=& 
{\textstyle{\frac1{2n+2}}}\biggl(2\omega(X,Y)\rho^\nabla Z
 + \omega(X,Z)\rho^\nabla Y - \omega(Y,Z)\rho^\nabla X\cr
&&+ \omega(X,\rho^\nabla Z)Y - \omega(Y,\rho^\nabla Z)X
\biggr)
\end{eqnarray}}
with $\rho^\nabla$ the Ricci endomorphism. The condition to be locally symmetric is equivalent to $\nabla\rho^\nabla=0$, thus
$\rho^\nabla$ commutes with $R^\nabla(X,Y)$ and this yields
$\omega(X,Z)(\rho^\nabla)^2 Y - \omega(Y,Z)(\rho^\nabla)^2 X= \omega(X,(\rho^\nabla)^2 Z)Y - \omega(Y,(\rho^\nabla)^2 Z)X$
which implies
$(\rho^\nabla)^2=k\Id$ where $k$ is a real constant. 

Symmetric symplectic  spaces with Ricci-type connections were studied with John Rawnsley in \cite{sgbib:CGR};
Nicolas Richard examined the analogue of the notion of constant holomorphic sectional curvature in a symplectic context  in his thesis \cite{sgbib:Richard}.
\subsection{Models of Ricci-type symmetric symplectic spaces}
We recall a  construction by reduction of examples of Ricci-type symplectic connections  which was given by Baguis and Cahen in  \cite{sgbib:BC}.
Let $(\R^{2n+2}\setminus\{ 0\},\Omega)$ be the standard symplectic vector space without the origin
and let $A$ be an element in the symplectic Lie algebra $sp(\R^{2n+2},\Omega)$, i.e.$^{tr}A\Omega+\Omega A=0$, where $^{tr}B$ denotes the transpose of the matrix $B$. We consider the reduction with respect to the action of the $1$-parameter subgroup $\exp tA$; the action is Hamiltonian with corresponding Hamiltonian $h(x)=\half \Omega(x,Ax)$ and we
consider the embedded hypersurface given by a level set:
$\Sigma_A=\{ x\in \R^{2n+2} \,\vert\, \Omega(x,Ax)=1\}$,  assuming it to be non empty.
The $1$-parameter subgroup $\{\,{\exp tA}\,\}$ acts on $\Sigma_A$ and we consider the quotient
 $$ M^{red}:=\raisebox{0.3mm}{$\Sigma_A $}/\raisebox{-0.3mm}{${\exp tA}$} \quad \textrm{( it exists at least locally since $Ax\neq 0$!)}$$
with the canonical projection
$$ \pi:\Sigma_A\rightarrow M^{red}.$$
For any $x\in\Sigma_A$, we define a $2n$-dimensional  ``horizontal" subspace $H_x$ of the tangent space $T_x\Sigma_A$
given by the $\Omega$-orthogonal to the subspace spanned by $x$ and $Ax$:
 $$
 H_x:=\Span\{x,Ax\}^{\perp_\Omega}\,\subset \, T_x\Sigma_A\simeq \,\Span\{Ax\}^{\perp_\Omega}\, \subset\,  \R^{2n+2};$$
 the differential of the projection $\pi$ induces an isomorphism 
 $$\pi_{*_x}:H_x \sim T_{\pi(x)}M^{red}.
 $$ 
 Given a tangent vector $X\in T_yM^{red}$, we denote by $\overline{X}$ its horizontal lift:
$${\overline{X}}_x\in H_x, ~~ X_{y=\pi(x)}=\pi_{*_x}{\overline{X}}_x.$$
The reduced $2$-form $\omega^{red}$ on $M^{red}$ is defined in the standard way as the unique $2$-form $\omega^{red}$ on $M^{red}$
such that :
\begin{equation}
 \pi^*\omega^{red}=\Omega_{\vert_{\Sigma_A}}\quad\quad \textrm {i.e. }\quad  \omega^{red}_{y=\pi(x)}(X,Y):=\Omega_x({\overline{X}}_x,{\overline{Y}}_x);
 \end{equation}
and $(M^{red},\omega^{red})$ is a symplectic manifold.
The flat connection $\ring\nabla$ on $\R^{2n+2}$ induces a connection 
$\nabla^{red}$ on $M^{red}$ given by
\begin{equation}\label{eq:nablared}
{{(\nabla^{red}_X Y)_y:=\pi_{*_x}(\ring\nabla_{\overline{X}}{\overline{Y}}
-\Omega(A{\overline{X}},{\overline{Y}})x +\Omega ({\overline{X}},{\overline{Y}})Ax)}}.
\end{equation} 
This reduced connection $\nabla^{red}$ on $( M^{red},\omega^{red})$  is  symplectic and of Ricci-type.
With Lorenz Schwachh\"ofer, we proved in \cite{sgbib:CGS} that any symplectic manifold endowed with a symplectic
connection of Ricci-type is locally of this form.
In the above construction the Ricci endomorphism at $y=\pi(x)\in M^{red}$ is proportional to the map induced by $A$ on $H_x$ :
$$-\frac{1}{2n+2} \rho^{\nabla^{red}}_{\pi(x)}({{X}})=\pi_{*_x}\left(A{\overline{X}}-\Omega(A{\overline{X}},Ax)x\right).
$$
The space is   locally symmetric if and only if  $(\rho^\nabla)^2=k\Id$ and this happens if and only if $A^2=\lambda\Id$.\\
The  models $(M_A,\omega^{red},\nabla^{red})$ of symplectic space forms that we shall use are the connected components of the ones obtained by the above construction,  using an element $A \in sp(\R^{2n+2},\Omega)$ so that $A^2=\lambda\Id$; in these cases the quotient of the hypersurface $\Sigma_A$ by the action of $\{ \exp tA\,\}$ is globally defined
and we have
$$\pi :  \Sigma_A=\{ x\in \R^{2n+2}\,\vert\, \Omega(x,Ax)=1\} \rightarrow M_A:=M^{red}_{cc}=(\raisebox{0.3mm}{$\Sigma_A$} /\raisebox{-0.3mm}{${\{\exp tA\}}$})_{cc}$$
where $~_{cc}$ indicates that we take a connected component.\\
Any $B\in \widetilde{G}_A:=\{ \, B\in Sp(\R^{2n+2},\Omega)\, \vert \, BA=AB \, \}_0$, where $_0$ denotes the connected component of the identity, induces an automorphism of $M_A$ in the obvious way:
$$
B\cdot \pi(x):=\pi(Bx).
$$
 The space $M_A$ is not only locally symmetric; it is a symplectic symmetric space, with the symmetry at $y=\pi(x)$  induced by
$$
S_x(v)=-v-2\Omega(v,x) Ax +2\Omega(v,Ax) x.
$$
\begin{theorem}\cite{sgbib:CGS,sgbib:CGW}
Let $A \in sp(\R^{2n+2},\Omega)$ be so that $A^2=\lambda\Id$ and consider the reduced manifold $M_A=(\{ x\in \R^{2n+2}\vert \Omega(x,Ax)=1\} /_{\{\exp tA\}})_{cc}$ with the reduced symplectic structure $\omega^{red}$ and the reduced symplectic connection $\nabla^{red}$.\\
- Each $(M_A,\omega^{red},\nabla^{red})$ is a symmetric symplectic Ricci-type manifold;
\begin{enumerate}
\item it admits the group $\widetilde{G}_A=\{ \, B\in Sp(\R^{2n+2},\Omega)\, \vert \, BA=AB \, \}_0$, as well as the quotient group $\left(\raisebox{0.3mm}{$\widetilde{G}_A$}/\raisebox{-0.3mm}{$\{\exp tA\}$}\right)_0$, as  groups of automorphisms acting transitively;
\item the action of $\widetilde{G}_A$ is strongly Hamiltonian : if $D$ is an element in the Lie algebra ${\widetilde{\g}}_A$ of $\widetilde{G}_A$ and if $D^{*M_A}$ denotes the corresponding fundamental vector field on $M_A$, i.e. $D^{*M_A}_y=\frac{d}{dt}_{\vert_0} \exp(-tD)\cdot y$ then $\iota(D^{*M_A})\omega^{red}=df_D$ with $\left(\pi^*(f_D)\right)(x)=\half(\Omega(x,Dx)$ and the associated moment map $$J : M_A\rightarrow {\widetilde{\g}}_A^* : \pi(x)\mapsto [D\to \half(\Omega(x,Dx)]$$ is $\widetilde{G}_A$ equivariant.
\item Any symplectic space form is diffeomorphic to a quotient of the universal cover of  a model space $(M_A,\omega^{red},\nabla^{red})$. 
\end{enumerate}
\end{theorem}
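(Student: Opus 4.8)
The plan is to establish the four assertions in turn. \emph{First assertion.} That $\nabla^{red}$ is symplectic and of Ricci-type comes from the reduction construction \cite{sgbib:BC}, and it is locally symmetric exactly when $A^2=\lambda\Id$, as recalled above; what remains is that $M_A$ is a genuine (hence complete) symplectic symmetric space. For this I would verify that the linear endomorphism $S_x$ of $\R^{2n+2}$ displayed above induces a symmetry at $\pi(x)$: it lies in $Sp(\R^{2n+2},\Omega)$, it preserves the Hamiltonian $h(v)=\half\Omega(v,Av)$ and hence $\Sigma_A$, and it carries $\{\exp tA\}$-orbits to $\{\exp tA\}$-orbits, so it descends to a diffeomorphism $s_{\pi(x)}$ of $M_A$ --- each of these being a short identity whose only inputs are $A^2=\lambda\Id$ and $^{tr}A\Omega+\Omega A=0$. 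Since $S_x$ is linear it is affine for $\ring\nabla$, so by the form of \eqref{eq:nablared} the map $s_{\pi(x)}$ is an affine symplectomorphism; $S_x$ fixes $\Span\{x,Ax\}$ and equals $-\Id$ on $H_x$, so $s_{\pi(x)}$ fixes $\pi(x)$ with differential $-\Id$ (hence isolatedly); $S_x^2=\Id$ gives involutivity and $S_xS_yS_x=S_{S_xy}$ the symmetric-space relation. Completeness then follows from homogeneity together with the $s_y$: any geodesic is prolonged indefinitely by reflecting it in a symmetry at an interior point.

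\emph{Assertion (1).} Every $B\in\widetilde G_A$ commutes with $A$, hence with each $\exp tA$, and preserves $h$; so it preserves $\Sigma_A$, descends to $B\cdot\pi(x)=\pi(Bx)$, and --- being linear, symplectic and commuting with $A$ --- intertwines all the data in \eqref{eq:nablared}, so it acts by automorphisms of $(M_A,\omega^{red},\nabla^{red})$, as already recalled. Since $\{\exp tA\}$ acts trivially on $M_A$, the groups $\widetilde G_A$ and $(\widetilde G_A/\{\exp tA\})_0$ have the same orbits, so it is enough to show $\widetilde G_A$ transitive; and, $\widetilde G_A$ being connected and $M_A$ connected (a connected component of a reduced space by construction), it suffices to show the orbits are open, i.e. that the fundamental vector fields span $T_{\pi(x)}M_A$ everywhere. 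Through the isomorphism $\pi_{*x}:H_x\to T_{\pi(x)}M_A$ this is the linear statement
\[
\{\,Dx\suchthat D\in\widetilde{\g}_A\,\}+\Span\{x,Ax\}=\R^{2n+2}.
\]
I would prove it by taking, for $v\in\R^{2n+2}$, the rank-two element $D_{a,b}w:=\Omega(a,w)b+\Omega(b,w)a$ of $\mathfrak{sp}(\R^{2n+2},\Omega)$ with $a=-Ax$ and $b=v$, for which $D_{a,b}x=v-\Omega(v,x)Ax\equiv v\pmod{\Span\{x,Ax\}}$, and then projecting $D_{a,b}$ onto the centralizer $\widetilde{\g}_A$ of $A$. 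When $\lambda\neq 0$ the operator $\ad A$ on $\mathfrak{sp}(\R^{2n+2},\Omega)$ has an invertible part and a companion algebraic projection onto $\widetilde{\g}_A$ (for $\lambda<0$ this is literally the average over the circle $\{\exp tA\}$), and a short computation shows the projected generator still moves $x$ by a nonzero multiple of $v$ modulo $\Span\{x,Ax\}$; alternatively one identifies $\widetilde G_A$ with the identity component of a $GL(n+1,\R)$-type group when $\lambda>0$ and with a connected $U(p,q)$ when $\lambda<0$, and checks transitivity on $\Sigma_A/\{\exp tA\}$ by hand. The case $\lambda=0$ is where I expect the real work to lie: $\ad A$ is then nilpotent, no such projection is available, and one must exploit the explicit structure of $\widetilde G_A$ --- the connected centralizer of a square-zero nilpotent element of $\mathfrak{sp}(\R^{2n+2},\Omega)$, a semidirect product of a classical Levi factor by a unipotent radical --- to obtain transitivity directly; this is precisely the range the present paper adds to the earlier list.

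\emph{Assertion (2).} From $D^{*M_A}_{\pi(x)}=\frac{d}{dt}\big|_0\exp(-tD)\cdot\pi(x)=\pi_{*x}(-Dx)$ and $Dx\in T_x\Sigma_A$ (as $[D,A]=0$ makes $D$ preserve $h$), the horizontal lift is $\overline{D^{*M_A}}_x=-\big(Dx+\Omega(Dx,x)Ax\big)$. Then, for $X\in T_{\pi(x)}M_A$ with lift $\overline{X}_x\in H_x$, one gets $(\iota(D^{*M_A})\omega^{red})(X)=\Omega_x(\overline{D^{*M_A}}_x,\overline{X}_x)=\Omega_x(\overline{X}_x,Dx)$ using $\Omega(Ax,\overline{X}_x)=0$, and likewise $d(\pi^*f_D)_x(\overline{X}_x)=\Omega_x(\overline{X}_x,Dx)$ because $\pi^*f_D=h_D$ with $h_D(v)=\half\Omega(v,Dv)$ and $D\in\mathfrak{sp}(\R^{2n+2},\Omega)$; injectivity of $\pi^*$ on forms then gives $\iota(D^{*M_A})\omega^{red}=df_D$, so the action is Hamiltonian. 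Finally $J(\pi(x))(D)=\half\Omega(x,Dx)$ is well defined on $M_A$ because $[D,A]=0$ makes $h_D$ constant along $\{\exp tA\}$-orbits, and $J(\pi(Bx))(D)=\half\Omega(Bx,DBx)=\half\Omega(x,(B^{-1}DB)x)=J(\pi(x))(\Ad_{B^{-1}}D)$, so $J$ is equivariant for the coadjoint action; together with the Hamiltonian property this is strong Hamiltonicity.

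\emph{Assertion (3).} Let $(M,\omega,\nabla)$ be a symplectic space form. By the Cahen--Gutt--Schwachh\"ofer theorem \cite{sgbib:CGS} recalled above, $\nabla$ is, near each point, isomorphic to the reduced connection of some model, and local symmetry forces the corresponding $A$ to satisfy $A^2=\lambda\Id$, with $\lambda$ fixed by the constant $k$ in $(\rho^\nabla)^2=k\Id$; by connectedness one $A$ works globally. Being complete and locally symmetric, $\widetilde M$ is a globally symmetric space sharing its infinitesimal symmetric data with $\widetilde{M_A}$, so by uniqueness of the simply connected symmetric space with prescribed infinitesimal data (a Cartan--Ambrose--Hicks type statement, here for symplectic symmetric triples) one has $\widetilde M\cong\widetilde{M_A}$ as symplectic manifolds with connection. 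Hence $M=\widetilde M/\pi_1(M)$ is a quotient of the universal cover of the model $M_A$, with $\pi_1(M)$ acting through automorphisms of $(\widetilde{M_A},\omega^{red},\nabla^{red})$.
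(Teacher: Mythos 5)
The paper does not actually prove this theorem: it is imported wholesale from \cite{sgbib:CGS,sgbib:CGW}, and the only in-paper material that functions as a proof is the explicit description of the models in the Proposition that follows (the formulas for $S_x$, the identifications $M_A\simeq T^*S^n$, $SU(p+1,n-p)/U(p,n-p)$, $T^*Q^{p,r-p}\times W$, and the stabilizer computations). So there is no internal proof to compare yours against; judged on its own, your argument is sound and follows the natural route. The computations I checked are correct: $S_x$ is symplectic and commutes with $A$ (both using $\Omega(x,Ax)=1$ and ${}^{tr}A\Omega+\Omega A=0$), it is $\Id$ on $\Span\{x,Ax\}$ and $-\Id$ on $H_x$, and $S_xS_yS_x=S_{S_xy}$ follows from $S_x\in Sp$, $S_x^2=\Id$, $S_xA=AS_x$; the horizontal lift $-(Dx+\Omega(Dx,x)Ax)$ and the identity $dh_D|_x(u)=\Omega(u,Dx)$ give exactly $\iota(D^{*M_A})\omega^{red}=df_D$ with the paper's sign convention $D^{*M_A}_y=\frac{d}{dt}\vert_0\exp(-tD)\cdot y$; and for $\lambda<0$ your circle-average of $D_{-Ax,v}$ does move $x$ by $\tfrac12 v$ modulo $\Span\{x,Ax\}$. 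Assertion (3) via the local classification of \cite{sgbib:CGS} plus rigidity of simply connected symmetric spaces with prescribed infinitesimal data is the standard argument.

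The one place where your write-up stops short of a proof is transitivity in assertion (1) when $A^2=0$: you correctly diagnose that $\ad A$ is nilpotent so no spectral projection onto $\widetilde{\g}_A$ is available, and you name the right structure to use, but you do not carry out the argument. It is not a wrong approach --- the missing computation is precisely Case 3 of the paper's Proposition: in the decomposition $\R^{2(n+1)}=(AV'\oplus V')\oplus^{\perp_\Omega}W'$ one sees that $SO_0(p,r-p)$ acts transitively on (a connected component of) $Q^{p,r-p}$, the quotient $Sym(r,\R)/Sym(r-1,\R)$ acts by translations filling the cotangent fibre over $e_1$, and $Mat(r\times 2m,\R)/Mat((r-1)\times 2m,\R)\simeq\R^{2m}$ translates the $W$ factor; reading off the stabilizer of $\pi(0,e_1,0)$ as the set of $(B,C,U,S)$ with $Be_1=e_1$, ${}^{tr}Ue_1=0$, $Se_1=te_1$ then gives the homogeneous presentation and hence transitivity. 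If you splice that computation in (or simply cite the Proposition for it), the proof is complete.
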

A description of the model spaces appeared in \cite{sgbib:CGS}. We now refine this description as follows : 
\begin{proposition}
{\bf{Case 1: $A^2=\lambda\Id$ with $\lambda>0$}}\\
If $\lambda=k^2, k>0$ we view $\R^{2n+2}=L_+\oplus L_-$ as the sum of the two Lagrangian subspaces  corresponding to the $\pm k$ eigenspaces for $A$. In an adapted basis where $A=\left(\begin{matrix}k\Id&0\\0&-k\Id\end{matrix}\right)$  and $\Omega=\left(\begin{matrix}0&\Id\\ -\Id&0\end{matrix}\right)$, the level set is given by $$\Sigma_A=\{ (u,v) \in \R^{2(n+1)}\,\vert\, u\cdot v =-\frac{1}{2k}\}.$$ Seeing the cotangent bundle to the sphere as 
$$T^*S^n=\{( \tilde{u},\tilde{v})\in \R^{2(n+1)}\,\vert\, \Vert \tilde{u} \Vert=1, \tilde{u}\cdot \tilde{v}=0\},$$ the canonical cotangent symplectic structure is given by the restriction to $T^*S^n$ of the $2$-form $\tilde{\Omega}$ on $ \R^{2(n+1)}$ defined by $\sum_{i=1}^{n+1} d\tilde{v}^i\wedge d\tilde{u}^i$. \\The map
 $\phi: \Sigma_A\rightarrow T^*S^n: (u,v) \mapsto (\tilde{u}=\frac{v}{\Vert v\Vert},\tilde{v}=\Vert v \Vert u +\frac{1}{2k} \frac{v}{\Vert v\Vert})$, induces  a diffeomorphism    $$\psi : M^{red}\rightarrow T^*S^n : \pi(u,v)=\pi(e^tu,e^{-t}v)\mapsto (\tilde{u}=\frac{v}{\Vert v\Vert},\tilde{v}=\Vert v \Vert u + \frac{1}{2k} \frac{v}{\Vert v\Vert}).$$
The pullback by this diffeomorphism of the  canonical symplectic structure on $T^*S^n$ is  the reduced symplectic structure $\omega^{red}$ on $M^{red}$; indeed 
\small{
$$\pi^*(\psi^*(\tilde{\Omega}_{\vert_{T^*S^n}}))=\phi^*(\tilde{\Omega}_{\vert_{T^*S^n}})=\left(\sum_{i=1}^{n+1} du^i\wedge d{v}^i -\frac{d(u.v)\wedge (v.dv)}{\Vert v\Vert^2}\right)_{\vert_{\Sigma_A}}=\Omega_{\vert_{\Sigma_A}}=\pi^*\omega^{red},
$$}
 and $\pi^*$ is injective. Since $T^*S^n$ is connected, $M_A=M^{red}$, and, as a symplectic manifold, we have
\begin{equation}
 M_A\simeq T^*S^n. 
 \end{equation}
 Observe that it is simply connected when its dimension is at least $4$.\\A matrix $B$ is in $\widetilde{G}_A$ if and only if it is of the form $B=\left(\begin{matrix}C&0\\0&^{tr}C^{-1}\end{matrix}\right)$ with $C\in Gl_+(n+1,\R)$ so that  $\widetilde{G}_A\simeq Gl_+(n+1,\R)$.
 The group $G_A=\raisebox{0.3mm}{$\widetilde{G}_A$}/\raisebox{-0.3mm}{$\{\exp tA\}$}$ identifies with the subgroup  $Sl(n+1,\R)$; it acts transitively on $M_A$, and the  symmetric structure is defined by
\begin{equation}
M_A= \raisebox{0.3mm}{$Sl(n+1,\R)$}/\raisebox{-0.3mm}{$Gl_+(n,\R)$}
\end{equation} 
with $Gl_+(n,\R)$ sitting in $Sl(n+1,\R)$ as $\left\{\left(\begin{matrix} \det B^{-1}&0\\0&B\end{matrix}\right)\,\vert\, B\in Gl_+(n,\R)\right\}$; it is the stabilizer of the point $p_0:=\pi(-\frac{1}{2k}e_1,e_1)$ for $e_1=(1,0,\ldots,0) \in \R^{n+1}$; the symmetry $s_{p_0}$ is induced by $S_0:=\left(\begin{matrix} 1&0\\0&-\Id\end{matrix}\right)$ and conjugation by $S_0$ is an automorphism $\sigma$ of $Sl(n+1,\R)$ with $Gl_+(n,\R)$ the connected component of its fixed points.\\
The action of  $Sl(n+1,\R)$ is strongly Hamiltonian; identifying ${\mathfrak{sl}}(n+1,\R)^*$  to ${\mathfrak{sl}}(n+1,\R)$ via the Killing form, the associated moment map is $$ J': M_A\mapsto {\mathfrak{sl}}(n+1,\R) : \pi(u,v)\mapsto \frac{1}{2(n+1)}\left(- u\otimes (v\cdot \, )+\frac{1}{n+1}(u\cdot v)\Id\right);$$ it 
presents the homogeneous symplectic manifold $M_A$ as a double cover of the adjoint orbit in  ${\mathfrak{sl}}(n+1,\R)$ of the element $\frac{1}{4k(n+1)^2}\left(\begin{matrix} n&0\\0&-\Id\end{matrix}\right)$; this orbit  is $\raisebox{0.3mm}{$Sl(n+1,\R)$}/\raisebox{-0.3mm}{$Gl(n,\R)$}$.\\

{\bf{Case 2: $A^2=\lambda\Id$ with $\lambda<0$}}\\ 
If $\lambda=-k^2<0$, then    $J=\frac{1}{k} A$ defines a complex structure, identifying $\R^{2n+2}$ to $\C^{n+1}$. We denote by $p$ the integer such that the signature of the quadratic form $\Omega(x,Ax)$ on $\R^{2(n+1)}$ is $(2(p+1), 2(n-p))$; the pseudo-Hermitian
structure  defined by $<x,y>:=\Omega(x,Jy)-i\Omega(x,y)$ on $\C^{n+1}$ is of signature (over $\C$) equal to $(p+1,n-p)$. Clearly $0\le p\le n$.  The level set is given by $\Sigma_A=\{ z\in \C^{n+1}\,\vert\, <z,z>=\frac{1}{k}\}$ and the reduced space $M^{red}=\{ [z]\,\vert\, z \in \C^{n+1}, <z,z>=\frac{1}{k}, [z]=[e^{it}z] \}$ is connected and simply connected.  A $\R$-linear endomorphism of $\R^{2(n+1)}$ which commutes with $J$ identifies  with a $\C$- linear endomorphism of $\C^{n+1}$. The group $\widetilde{G}_A$ identifies with the pseudo-unitary group $U(p+1,n-p)= Sp(\R^{2(n+1)},\Omega)\cap Gl(n+1,\C)$ and the quotient $\raisebox{0.3mm}{$\widetilde{G}_A$}/\raisebox{-0.3mm}{$\{\exp tA\}$}$ to a finite quotient (by $(n+1)$th roots of the identity) of  its subgroup $SU(p+1,n-p)$ and we denote in this case by $G_A=SU(p+1,n-p)$. The model space is 
\begin{equation}
M_A=M^{red}=\raisebox{0.3mm}{$SU(p+1,n-p)$}/ \raisebox{-0.3mm}{$U(p,n-p)$},
\end{equation}
endowed with its natural invariant pseudo K\"ahler structure, 
with $U(p,n-p)$ sitting in $SU(p+1,n-p)$ as $\{\left(\begin{matrix} \det B^{-1}&0\\0&B\end{matrix}\right)\,\vert\, B\in U(p,n-p))\}$ i.e. as the stabilizer of the point $p_0:=\pi(e_1)$ for $e_1=(1,0,\ldots,0) \in \C^{n+1}$.
Thus 
\begin{itemize}
\item $M_A=\C H^n=\raisebox{0.3mm}{$SU(1,n)$}/\raisebox{-0.3mm}{$U(n)$}$ is the complex hyperbolic space for $p=0$;
\item $M_A=E$ is  a holomorphic vector bundle of rank $n-p$ over $\C P^{p}$ for $1\le p\le n-1$;  
\item $M_A=\C P^n=\raisebox{0.3mm}{$SU(n+1)$}/ \raisebox{-0.3mm}{$U(n)$}$ is the complex projective space  for $p=n$.
\end{itemize}
The action of  $SU(p+1,n-p)$ is strongly Hamiltonian; using  the trace to identify ${\mathfrak{su}}(p+1,n-p)^*$  with ${\mathfrak{su}}(p+1,n-p)$, the moment map is
$$ J': M_A\mapsto {\mathfrak{su}}(p+1,n-p) : \pi(z)\mapsto \left(\frac{-i}{2} z\otimes <\cdot ,z\,> +\frac{i<z,z>}{2(n+1)}\Id\right);$$ it 
presents the homogeneous symplectic manifold $M_A$ as the adjoint orbit of $\frac{1}{2k(n+1)}\left(\begin{matrix} -ni&0\\0&i\Id\end{matrix}\right)$ in ${\mathfrak{su}}(p+1,n-p)$.

{\bf{Case 3: $A^2=0$}}\\ If $\lambda=0$ there are two integers $r$ and $p$ attached to the space form, $r$ being the rank of $A$ (so that $1\le r\le n+1$) and $( p,r-p )$ being the signature  of the quadratic $2$-form $\Omega(x,Ax)$ (so that $1\leq p\leq r$) which induces a pseudo Riemannian metric $g$ on the quotient 
$$V:=\raisebox{0.3mm}{$\R^{2(n+1)}$}/\raisebox{-0.3mm}{$\Ker A$}.$$
 Observe that $\Ker A$ is the $\Omega$-orthogonal of $\Image A$, and  $\Image A \subset \Ker A$ so that 
 $$
 W:=\raisebox{0.3mm}{$\Ker A$}/\raisebox{-0.3mm}{$\Image A$}
 $$
 with the  $2$-form $\Omega'$ induced by $\Omega$ is a $2m :=2(n+1-r)$-dimensional symplectic vector space. Choosing a $r$-dimensional $\Omega$-isotropic subspace $V'$ supplementary to $\Ker A$, we have $V\simeq V'$ and we denote by $g'$ the corresponding metric of signature $(p,r-p)$ on $V'$. If $W'$ is the $\Omega$-orthogonal to $AV'\oplus V'$, then $W'\simeq W$, and we write 
 $\R^{2(n+1)}= (AV'\oplus V') \oplus^{\perp_\Omega} W'$ ; in well chosen corresponding basis, \\
 $A=\left(\begin{matrix} 0&\Id_r&0\\ 0&0&0\\0&0&0\end{matrix}\right)$ and $\Omega=\left(\begin{matrix} 0&-G&0\\ 
G&0&0\\0&0&\Omega'\end{matrix}\right)$, with $G:=\Id_{p}\oplus -\Id_{r-p}$, so the level set is \\$\Sigma_A=\{(Ax,v,w)\,\vert\, x,v \in V', w\in W', g'(v,v)=\Omega(v,Av)=1\}$. 
We define $$Q^{p,r-p}:=\{ v\in V\simeq \R^{r}\,\vert \, g(v,v)=1\}$$ and identify its tangent bundle to
$TQ^{p,r-p}=\{ v,y\in V\,\vert \, g(v,v)=1,g(y,v)=0\}$. The map 
 $$\phi : M^{red}\rightarrow T^*Q^{p,r-p}\times W : \pi(Ax,v,w)\mapsto  \left(v, -g(x,\cdot),w\right)$$
 is well defined since $\exp tA\cdot (Ax,v,w)=(Ax+tAv,v,w)$ and defines a symplectomorphism
 \begin{equation}
 M^{red}\simeq T^*Q^{p,r-p}\times W
 \end{equation}
with the product  of the canonical cotangent bundle symplectic structure on $T^*Q^{p,r-p}$ and the natural symplectic structure on $W$.
Remark that $M^{red}$ is connected iff $p>1$, and has two connected components if $p=1$; it is simply connected when $p\neq 2$.

\noindent The group of symplectic transformations of $\R^{2(n+1)}$ commuting with $A$, written in the basis described above, is

{\scriptsize{$$
\left\{ \left(\begin{matrix} B&BGT&BU\Omega'\\ 
0&B&0\\
0& -C(^{tr}U)G&C\\
\end{matrix}\right)=\left(\begin{matrix} B&0&0\\ 
0&B&0\\
0& 0&C\\
\end{matrix}\right) \left(\begin{matrix} \id&GT&U\Omega'\\ 
0&\Id&0\\
0& -^{tr}UG&\id\\
\end{matrix}\right)\,{\Big{\vert}}\, \begin{matrix}B \in O(p,r-p), U\in Mat(r\times 2m,\R)\\
                                                            \ T=S+\half GU\Omega' (^{tr}U)G,\\ C\in \Sp(2m,\R), S\in Sym(r,\R)\end{matrix}
\right\}
$$}}
so that $\widetilde{G}_A$ is the semidirect product of the semisimple subgroup $S$, which is the direct product $S:= SO_0(p,r-p)\times Sp(2m, \R)$, and the normal solvable subgroup $R=Mat(r\times 2m,\R)\cdot Sym(r,\R)$ where $Sym(r,\R)$ is the set of symmetric real $r\times r$ matrices, with the product 
$$(U,S)\cdot (U',S')=\left(U+U', S+S'+\half G(U'\Omega'(^{tr}U)-U\Omega'(^{tr}U'))G\right).$$ 
As symmetric space, we have
{\scriptsize{\begin{equation}
M_A= \raisebox{0.5mm}{$S\cdot R$}/ \raisebox{-0.5mm}{$ \bigl(SO_0(p-1,r-p)\times Sp(2m, \R)\bigr)\cdot \bigl(Mat((r-1) \times 2m,\R)\cdot (\R\times Sym((r-1),\R))\bigr)$}
\end{equation}}}
which is the quotient of ${\widetilde{G}}_A$ by the stabilizer of $\pi(0,e_1,0)$, with $e_1$ the first vector in the chosen pseudo-orthogonal basis of $V$, thus the set of elements of $\widetilde{G}_A$ mapping $(0,e_1,0)$ on an element of the form $(te_1,e_1,0)$ for some $t\in\R$, i.e. those elements $(B,C,U,S)$ described above with $Be_1=e_1, ^{tr}Ue_1=0$ and $Se_1=te_1$.\\
Denoting by $(b,c,u,s)$ an element in the Lie algebra $\widetilde{\mathfrak{g}}_A$, the moment map 
$$
J:M_A\simeq\left(T^*Q^{p,r-p}\times W\right)_{cc}\rightarrow \widetilde{\mathfrak{g}}_A^*
$$ reads $<J(\pi(Ax,v,w)),(b,c,u,s)>= -g(x,bv)+ \half s(v,v)+g(v,U\Omega'w)+ \half\Omega(w,cw)$.\\
Hence $J(\pi(Ax,v,w))=J(\pi(Ax',v,'w'))$ iff $(Ax',v',w')=\pm(Ax,v,w)$, so the moment map is a double cover of the  orbit of the element $\half s_{11}^*\in \widetilde{\mathfrak{g}}_A^*$ defined by $s^*(b,c,u,s)=s^{11}$
when $p>1$.
When $p=1, M_A$ is one of the connected component of $T^*Q\times W$ hence it corresponds to elements in $v\in Q$ so that  the first component has a given sign, i.e. $ v^1>0$ for one component and $v^1<0$ for the other component. In that case, if $\pi(Ax,v,w)$is in $M_A$ then
$\pi(-Ax,-v,-w)$ is not in $M_A$ so $J$ is a symplectomorphism between $M_A$ and  the coadjoint orbit of $\half s_{11}^*$ in $\widetilde{\mathfrak{g}}_A^*$.

\end{proposition}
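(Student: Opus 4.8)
The proposition has no analytic content: by the theorem just quoted, $\nabla^{red}$ is already known to be symplectic, of Ricci type and locally symmetric once $A^2=\lambda\Id$, so what is left is the explicit identification of the model spaces, their automorphism groups, their symmetric‑space presentations and their moment maps. The plan is therefore to treat the three cases separately and, in each case, to reduce the pair $(A,\Omega)$ to the normal form dictated by the algebra of $A$ together with the signature data, then read off every claim by direct linear algebra. The normal forms are obtained as follows. If $\lambda=k^{2}>0$, the relation $A\in sp(\R^{2n+2},\Omega)$ gives $\Omega(Ax,Ay)=-\lambda\,\Omega(x,y)$, whence each eigenspace $L_{\pm}$ of $A$ is $\Omega$‑isotropic; being complementary of total dimension $2n+2$ they are Lagrangian and dually paired, which produces the displayed basis and the level set $\Sigma_A=\{u\cdot v=-\tfrac1{2k}\}$. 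If $\lambda=-k^{2}<0$, then $J:=\tfrac1kA$ squares to $-\Id$ and preserves $\Omega$, so $\langle x,y\rangle:=\Omega(x,Jy)-i\Omega(x,y)$ is pseudo‑Hermitian with $\mathrm{Re}\langle x,x\rangle=\tfrac1k\Omega(x,Ax)$, fixing its signature, and $\Sigma_A=\{\langle z,z\rangle=\tfrac1k\}$. If $\lambda=0$, nilpotency gives $\Image A\subseteq\Ker A=(\Image A)^{\perp_\Omega}$, so $\Omega$ descends to a symplectic form on $W=\Ker A/\Image A$ while $x\mapsto\Omega(x,Ax)$ descends to a nondegenerate form of rank $r$ on $V=\R^{2n+2}/\Ker A$; choosing an $\Omega$‑isotropic complement $V'$ to $\Ker A$ and then $W'=(AV'\oplus V')^{\perp_\Omega}$ gives the displayed block shapes and the stated $\Sigma_A$.

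Next I would exhibit the symplectomorphisms with cotangent bundles. In each case one writes down the explicit map on $\Sigma_A$ — namely $(u,v)\mapsto\bigl(\tfrac v{\|v\|},\|v\|u+\tfrac1{2k}\tfrac v{\|v\|}\bigr)$ into $T^*S^n$ for $\lambda>0$, the tautological projectivization of the pseudo‑sphere for $\lambda<0$, and $\pi(Ax,v,w)\mapsto(v,-g(x,\cdot),w)$ into $T^*Q^{p,r-p}\times W$ for $\lambda=0$ — and checks three things: that the image lies in the claimed target (using $u\cdot v=-\tfrac1{2k}$, resp. $\langle z,z\rangle=\tfrac1k$, resp. $g(v,v)=1$); that the map is constant along the $\exp tA$‑orbits $\pi(e^tu,e^{-t}v)$, resp. $\pi(e^{it}z)$, resp. $\pi(Ax+tAv,v,w)$ — in the last case because $-g(x+tv,\cdot)$ and $-g(x,\cdot)$ differ by $-t\,g(v,\cdot)$, which vanishes on $T_vQ^{p,r-p}=v^{\perp_g}$ — so it factors through $M^{red}$ as a diffeomorphism; and that it intertwines the symplectic forms, which, exactly as in the displayed chain $\pi^*(\psi^*\tilde\Omega)=\Omega|_{\Sigma_A}=\pi^*\omega^{red}$, follows by pulling back the canonical form and invoking the injectivity of $\pi^*$. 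The topology of $M^{red}$ is then read off from the cotangent model: $T^*S^n$ is connected and (for $n\ge2$) simply connected; the projectivized pseudo‑sphere is connected and simply connected; and $T^*Q^{p,r-p}\times W$ retracts onto $Q^{p,r-p}$, connected iff $p\ge2$, with two components if $p=1$, and simply connected iff $p\neq2$.

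The automorphism groups and homogeneous structures come from solving $BA=AB$ with $B\in Sp(\R^{2n+2},\Omega)$ in the adapted basis: block‑diagonal $B=\mathrm{diag}(C,{}^{tr}C^{-1})$ for $\lambda>0$, so $\widetilde G_A\simeq Gl_+(n+1,\R)$; $\C$‑linear and $\Omega$‑preserving, i.e. $U(p+1,n-p)$, for $\lambda<0$; and the displayed three‑block matrices for $\lambda=0$, whose solution set is $S\ltimes R$ with $S=SO_0(p,r-p)\times\Sp(2m,\R)$ and $R=Mat(r\times2m,\R)\cdot Sym(r,\R)$ carrying the displayed twisted law. Quotienting by $\{\exp tA\}$ gives $Sl(n+1,\R)$, a finite quotient of $SU(p+1,n-p)$, resp. the stated quotient of $S\ltimes R$; identifying the stabilizer of the base point $p_0$ (namely $\pi(-\tfrac1{2k}e_1,e_1)$, $\pi(e_1)$, $\pi(0,e_1,0)$) by solving the coset equations yields $Gl_+(n,\R)$, $U(p,n-p)$, resp. the displayed subgroup, and the symmetry $S_0$ at $p_0$ is the specialization of $S_x(v)=-v-2\Omega(v,x)Ax+2\Omega(v,Ax)x$ to the chosen lift, conjugation by which is the involution whose fixed‑point connected component is that stabilizer. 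For the moment maps one substitutes $\pi^*(f_D)(x)=\tfrac12\Omega(x,Dx)$ into the chosen invariant pairing (Killing form on $\mathfrak{sl}$, trace form on $\mathfrak{su}$, the $s^{11}$‑pairing on $\widetilde{\mathfrak g}_A$) and solves for $J'$, obtaining the stated rank‑one formulas; comparing values on two lifts shows they agree exactly when the lifts differ by the relevant sign or phase, so $J'$ is a double cover of the adjoint orbit of the displayed semisimple element, except where $M_A$ already contains only one of $\pm x$, i.e. for $p=1$ in Case 3 and — because $e^{it}z$ sweeps the full phase — always in Case 2, where it is an embedding.

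The main obstacle is Case 3, which is the new content of the proposition. Pinning down the centralizer of a nilpotent $A$ of prescribed rank and signature inside the symplectic group — in particular justifying the constraint $T=S+\tfrac12GU\Omega'({}^{tr}U)G$ and the cocycle‑twisted product on $R$ — requires careful bookkeeping of the single bilinear condition ${}^{tr}B\Omega+\Omega B=0$ across the three blocks $AV'$, $V'$, $W'$ and their mutual pairings, and then identifying the stabilizer of $\pi(0,e_1,0)$ inside this semidirect product, together with the sign analysis in the moment map that separates $p=1$ from $p>1$. A secondary point specific to Case 3 is that the induced metric $g$ on $V$ is in general indefinite, so $Q^{p,r-p}$ is a pseudo‑sphere and the cotangent bundle and the map into it must be set up with the correct signature conventions. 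Everything else — the checks that the displayed maps are symplectomorphisms, the stabilizer computations, and the identification of $\widetilde G_A$ in Cases 1 and 2 — is routine verification, using the already‑recorded facts that $\nabla^{red}$ is symplectic of Ricci type and that $\pi^*$ is injective.
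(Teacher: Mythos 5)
Your proposal is correct and follows essentially the same route as the paper, which gives no separate proof but embeds exactly these verifications (normal forms for $(A,\Omega)$ in the three cases, the explicit maps to $T^*S^n$, the pseudo-sphere quotient and $T^*Q^{p,r-p}\times W$, the centralizer and stabilizer computations, and the moment-map formulas from $f_D(x)=\tfrac12\Omega(x,Dx)$) directly into the statement of the proposition. Your supporting computations (isotropy of $L_\pm$ from $\Omega(Ax,Ay)=-\lambda\Omega(x,y)$, descent of $\Omega(\cdot,A\cdot)$ to $V$, the sign/phase analysis distinguishing double covers from embeddings of coadjoint orbits) all check out, and you correctly single out the Case 3 centralizer bookkeeping as the genuinely new content.
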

\section{Homogeneous spaces in symplectic Radon duality}\label{SymplRadonduality}
\subsection {Totally geodesic symplectic submanifolds in $M_A$ }
We first show that our model spaces possess  as many as possible symplectic totally geodesic submanifolds. 
\begin{theorem}
  Let $M_A:= \left( \raisebox{0.3mm}{$\Sigma_A$}/ \raisebox{-0.3mm}{$\{\exp tA\}$}\right)_{cc} $ be a model of symplectic space form of dimension $2n$ as constructed above, with its canonical symmetric connection which is of Ricci type. We denote  as before by $\rho$ the corresponding Ricci endomorphism.
\begin{enumerate}
\item \label{thmpoint1} Let $S$ be a totally geodesic symplectic submanifold of $M_A$, of dimension $2q$, passing through $y$ and let $V=T_yS\subset T_yM_A$.
  Then $V$ is a symplectic subspace of $T_y( M_A)$ stable by $\rho_y$.\\
\item \label{thmpoint2} Reciprocally, let $V$ be a $2q$-dimensional symplectic subspace of $T_y( M_A)$ stable by $\rho_y$.
There exists a unique maximal  totally geodesic submanifold  $S$ of $M_A$, of dimension $2q$, passing through $y$ and tangent to $V$. It is given by
 $$
   S=  \left(  \raisebox{0.3mm}{$ \Sigma_A\cap W$} /  \raisebox{-0.3mm}{$\{\exp tA\} $}\right)_{cc} \quad \mathrm{with}\quad W= \overline{V}\oplus \R x\oplus \R Ax
 $$
 where $x$ is a point in $\Sigma_A$ so that  $\pi(x)=y$ and where $\overline{V}$ the $2q$--subspace of $\R^{2n+2}$
which is the horizontal lift of $V$ in  $T_x( \Sigma_A)=\, \Span\{Ax\}^{\perp_\Omega}\subset \R^{2n+2},$ i.e. the subspace defined by $\Omega(\overline{V},x)=0$,
 $\Omega(\overline{V},Ax)=0$ and $\pi_{*x} \overline{V}=V$.\\
Such a  totally geodesic submanifold is  automatically  a symplectic space form.  
\end{enumerate}
\end{theorem}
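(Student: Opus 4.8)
\emph{Proof plan.} For the first point, note that $V=T_yS$ is a symplectic subspace of $(T_yM_A,\omega^{red}_y)$ simply because $S$ is a symplectic submanifold, so only the $\rho_y$-stability requires work. Since $S$ is totally geodesic, $\nabla^{red}$ restricts to the induced connection on $S$, hence $R^{\nabla^{red}}_y(X,Y)Z\in V$ for all $X,Y,Z\in V$. I would substitute such $X,Y,Z$ into the Ricci-type curvature formula \eqref{eq:RRiccitype} and apply the projection $q$ of $T_yM_A$ onto the $\omega^{red}_y$-orthogonal complement $V^{\perp_\omega}$ of $V$ along $V$ (well defined because $V$ is symplectic). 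The terms $\omega(X,\rho Z)Y-\omega(Y,\rho Z)X$ vanish under $q$, and with $\psi:=q\circ\rho_y|_V\colon V\to V^{\perp_\omega}$ there remains
\[
2\,\omega(X,Y)\,\psi(Z)+\omega(X,Z)\,\psi(Y)-\omega(Y,Z)\,\psi(X)=0\qquad\text{for all }X,Y,Z\in V .
\]
Given $0\neq v\in V$, choosing $w\in V$ with $\omega(v,w)=1$ and setting $X=Z=v$, $Y=w$ forces $3\,\psi(v)=0$; hence $\psi\equiv0$, i.e. $\rho_yV\subseteq V$.

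For the second point, fix $x\in\Sigma_A$ with $\pi(x)=y$ and set $W:=\overline V\oplus\R x\oplus\R Ax\subset\R^{2n+2}$. I would first check that $W$ is a $(2q+2)$-dimensional symplectic subspace: since $\Omega_x(\overline X,\overline Y)=\omega^{red}_y(X,Y)$, the restriction $\Omega|_{\overline V}$ is nondegenerate; moreover $\overline V$ is $\Omega$-orthogonal to $\Span\{x,Ax\}$ by the definition of the horizontal lift, and $\Omega(x,Ax)=1$, so $W=\overline V\oplus^{\perp_\Omega}\Span\{x,Ax\}$ is a symplectic direct sum. Next, $W$ is $A$-stable: $A(\R x)=\R Ax$ and $A(\R Ax)=\R\lambda x$ lie in $W$, and for $\overline X\in\overline V$ I would feed $\rho_yV\subseteq V$ into the identity $-\tfrac1{2n+2}\,\rho_{\pi(x)}(X)=\pi_{*x}\bigl(A\overline X-\Omega(A\overline X,Ax)\,x\bigr)$ recorded before the theorem: the vector $A\overline X-\Omega(A\overline X,Ax)\,x$ lies in $T_x\Sigma_A=\Span\{Ax\}^{\perp_\Omega}$, its image under $\pi_{*x}$ lies in $V$, hence it differs from the horizontal lift of that image — an element of $\overline V$ — by a multiple of $Ax=\Ker(\pi_{*x}|_{T_x\Sigma_A})$; therefore $A\overline X\in\overline V\oplus\R x\oplus\R Ax=W$.

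Now put $A':=A|_W\in\mathfrak{sp}(W,\Omega|_W)$; it satisfies $(A')^2=\lambda\,\Id_W$, so the reduction of Section~\ref{symplectic space forms} may be carried out inside $W$, producing $\Sigma_{A'}=\Sigma_A\cap W$, $\exp tA'=(\exp tA)|_W$, and a reduced symplectic form $\omega'$ with a reduced Ricci-type connection $\nabla'$ on $M_{A'}=(\Sigma_A\cap W/\{\exp tA\})_{cc}$. The linear inclusion $W\hookrightarrow\R^{2n+2}$ descends to $\iota\colon M_{A'}\to M_A$, $\iota(\pi_{A'}(z))=\pi_A(z)$, which I would check is an injective immersion with $\iota^*\omega^{red}=\omega'$; thus $S:=\iota(M_{A'})$ is a symplectic submanifold and, being a copy of $M_{A'}$, automatically a symplectic space form of dimension $2q$. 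For the tangent space, at each $z\in\Sigma_{A'}$ the horizontal space of the $W$-reduction is $W\cap H_z$; since $\Span\{x,Ax\}$ meets $H_x$ trivially (because $\Omega(x,Ax)=1$), one gets $W\cap H_x=\overline V$ and $T_yS=\pi_{*x}(\overline V)=V$, and more generally $T_{\pi_A(z)}S=\pi_{*z}(W\cap H_z)$ from $\pi_A|_{\Sigma_{A'}}=\iota\circ\pi_{A'}$. Total geodesy then follows by comparing connections in \eqref{eq:nablared}: for vector fields $X,Y$ tangent to $S$, their $M_A$-horizontal lifts, evaluated along $\Sigma_{A'}$, already take values in $W$ (each is the unique vector of $H_z$ sent by $\pi_{*z}$ into $T_{\pi_A(z)}S=\pi_{*z}(W\cap H_z)$), and the flat connection $\ring\nabla$ of $\R^{2n+2}$ restricts along curves in $W$ to the flat connection of $W$; hence the right-hand side of \eqref{eq:nablared} uses only data internal to $W$ and equals the corresponding expression for $\nabla'$. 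Therefore $\nabla^{red}$ restricts to $\nabla'$ on $S$, so it preserves $TS$; and since $M_{A'}$ is geodesically complete, every $\nabla^{red}$-geodesic issuing tangentially from $S$ coincides with the corresponding $\nabla'$-geodesic and stays in $S$, so $S$ is totally geodesic.

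Finally, for uniqueness and maximality: if $S'$ is any connected totally geodesic submanifold through $y$ with $T_yS'=V$, then the exponential map of $\nabla^{red}$ at $y$ restricts to a diffeomorphism from a neighbourhood of $0$ in $V$ onto open subsets of both $S$ and $S'$, so $S$ and $S'$ agree near $y$; since the model space and its canonical connection are real-analytic, a connected totally geodesic submanifold is determined by a point and its tangent space there — propagate the germ at $y$ by analytic continuation along paths in $S'$, using that each $T_zS'$ is symplectic and $\rho_z$-stable by the first point — whence $S'\subseteq S$. Thus $S=(\Sigma_A\cap W/\{\exp tA\})_{cc}$ is the unique maximal such submanifold. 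I expect the main obstacle to be the total-geodesy step — precisely, making rigorous that the $M_A$-horizontal lifts of $S$-tangent vector fields remain inside $W$ over all of $\Sigma_{A'}$ (not merely at the base point $x$), so that \eqref{eq:nablared} genuinely reduces to the $W$-model formula — together with the care needed to state ``maximal'' correctly and to invoke real-analyticity in the uniqueness argument.
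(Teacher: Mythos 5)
Your proposal is correct and follows essentially the same route as the paper: point (1) via the fact that total geodesy forces $R^\nabla_y(X,Y)Z\in T_yS$ combined with the Ricci-type curvature formula, and point (2) by showing $W$ is $A$-stable and that the reduced-connection formula \eqref{eq:nablared} closes up inside $W$. You additionally spell out details the paper leaves implicit (the projection computation giving $3\psi(v)=0$, the $A$-stability of $W$ from $\rho_y V\subseteq V$, and the uniqueness/maximality argument), all of which are sound.
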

\begin{proof}
The proof follows from the fact  that a submanifold $S$ in a manifold  endowed with a torsionfree connection $(M,\nabla)$ is totally geodesic if and only if, for any $y\in S$ and any $X\in T_yS$, $\nabla_XY(y)$ belongs to $T_yS$ whenever $Y$ is tangent to $S$.\\
 In particular, if $S$ is a totally geodesic submanifold
$R^\nabla_y(X,Y)Z\in T_yS$ for any $y\in S$ and $X,Y,Z \in T_yS$. In view of the expression of a Ricci-type curvature given by
\eqref{eq:RRiccitype}, the tangent space $T_yS$ at the point $y\in S$ to a symplectic totally geodesic submanifold  $S$ in a Ricci-type symplectic manifold
$(M,\omega,\nabla)$ is stable by $\rho^\nabla_y$ and this proves (\ref{thmpoint1}).\\
To prove (\ref{thmpoint2}), we observe that, for any $\tilde{x}\in W$ and any $W$-valued vector fields ${\widetilde{X}},{\widetilde{Y}}$ on $W$, then 
$$
\ring\nabla_{\widetilde{X}}{\widetilde{Y}}({\tilde{x}})
-\Omega(A{\widetilde{X}},{\widetilde{Y}}){\tilde{x}} +\Omega ({\widetilde{X}},{\widetilde{Y}})A{\tilde{x}} \quad \textrm{ is in } \, W,
$$
{since the stability of $V$ by $\rho_y$ implies the stability of $W$ by $A$.}
Using formula \eqref{eq:nablared} which describes the reduced connection, $\nabla^{red}_XY(y)$ belongs to $T_yS$ for all $y\in S, X\in T_yS$ and $Y$ is tangent to $S$ when $ {{S=  \left(   \raisebox{0.3mm}{$\Sigma_A\cap W$}/\raisebox{-0.3mm}{$\{\exp tA\} $}\right)_{cc}}}$.
\end{proof}
  
\subsection{Spaces in Radon-type duality}
The group $G_A:=\left(\{ \, B\in Sp(\R^{2n+2},\Omega)\, \vert \, BA=AB \, \}/\{\exp tA\}\right)_0$ acts by symplectic affine transformations on $(\,M_A,\,\omega^{red},\,\nabla^{red}\,)$; it maps a symplectic totally geodesic submanifold of dimension $2q$ on a symplectic totally geodesic submanifold of dimension $2q$. 
\begin{theorem}  Let $M_A:= \left(\raisebox{0.3mm}{$\Sigma_A$}/\raisebox{-0.3mm}{$\{\exp tA\}$}\right)_{cc} $ be a model of symplectic space form of dimension $2n$ as described before.
\begin{enumerate}
  \item {{There exists a finite number of orbits of $G_A$ in the set of  symplectic maximal  totally geodesic 
      submanifolds of  $M_A$ for any given dimension $2q$.}}\\ 
     \item {{Each of these $G_A$--orbits 
  is a symmetric space.}}  \\
  \item {If $A^2\neq 0$, those orbits are symplectic symmetric spaces. }
   \end{enumerate}
  \end{theorem}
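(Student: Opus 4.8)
The plan is to reduce the whole question to linear algebra on $\R^{2n+2}$ by means of the preceding theorem on totally geodesic submanifolds, and then to run through the trichotomy $\lambda>0$, $\lambda<0$, $\lambda=0$ of the Proposition; I carry this out in three steps.

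\medskip\noindent\emph{Step 1 (reduction).} By the preceding theorem, the maximal totally geodesic symplectic submanifolds of $M_A$ of dimension $2q$ are exactly the $S_W:=\bigl(\Sigma_A\cap W\,/\,\{\exp tA\}\bigr)_{cc}$ with $W\subset\R^{2n+2}$ an $A$-stable subspace of dimension $2q+2$ on which $\Omega$ is non-degenerate and which meets $\Sigma_A$; moreover $W$ is recovered from $S_W$ as the linear span of $\pi^{-1}(S_W)\cap\Sigma_A$, so $W\mapsto S_W$ is a bijection, and since $\{\exp tA\}$ preserves every $A$-stable subspace, the $G_A$-action on these submanifolds matches the $\widetilde G_A$-action on such $W$. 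Two facts are used repeatedly: since $A\in\mathfrak{sp}(\R^{2n+2},\Omega)$, an $A$-stable subspace has $A$-stable $\Omega$-orthogonal, so $\R^{2n+2}=W\oplus W^{\perp_\Omega}$ splits into $A$-stable symplectic subspaces; and $A|_W\in\mathfrak{sp}(W,\Omega|_W)$ with $(A|_W)^2=\lambda\Id_W$, so $S_W$ is itself one of our model spaces.

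\medskip\noindent\emph{Step 2 (statement (1): finiteness).} Using the explicit form of $\widetilde G_A$ from the Proposition, one counts the $\widetilde G_A$-orbits of admissible $W$. If $\lambda=k^2>0$, write $\R^{2n+2}=L_+\oplus L_-$ for the $\pm k$-eigenspaces; an $A$-stable $W$ is $W_+\oplus W_-$ with $W_\pm\subset L_\pm$, and non-degeneracy of $\Omega|_W$ forces $\dim W_\pm=q+1$ and that $W_-$ be a complement in $L_-$ of the annihilator of $W_+$; as $\widetilde G_A\cong Gl_+(n+1,\R)$ is transitive on $(q+1)$-planes $W_+$ and the stabilising parabolic is transitive on the relevant complements, there is a single orbit for each $q$, and $w\mapsto\Omega(w,Aw)$ is hyperbolic of signature $(q+1,q+1)$ on $W$, so meeting $\Sigma_A$ is automatic. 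If $\lambda=-k^2<0$, with $J=\tfrac1kA$ the $A$-stable subspaces are the complex subspaces of $\C^{n+1}$ and $\Omega|_W$ is non-degenerate iff the pseudo-Hermitian form restricts non-degenerately; by Witt's theorem $\widetilde G_A\cong U(p+1,n-p)$ is transitive on complex subspaces of each signature $(a,b)$ with $a+b=q+1$, $a\le p+1$, $b\le n-p$, and meeting $\Sigma_A$ imposes in addition $a\ge1$ or $b\ge1$ according to the sign of $k$: finitely many orbits. If $\lambda=0$, using the explicit basis of the Proposition and $\widetilde G_A=S\cdot R$ ($S=SO_0(p,r-p)\times\Sp(2m,\R)$ semisimple, $R$ solvable), the isomorphism type of $(W,\Omega|_W,A|_W)$ is recorded by a few discrete invariants --- the dimensions of $W$, $W\cap\Ker A$ and $W\cap\Image A$, and the signature of the form induced by $w\mapsto\Omega(w,Aw)$ on $W/(W\cap\Ker A)$ --- whereupon one proves a Witt-type extension statement (any two $A$-equivariant, $\Omega$-isometric embeddings of such a triple into $(\R^{2n+2},\Omega,A)$ are $\widetilde G_A$-conjugate) by a normal-form computation in which $S$ settles the ``diagonal'' data and the translations in $R$ absorb the residual freedom; meeting $\Sigma_A$ again leaves finitely many types. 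This gives (1).

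\medskip\noindent\emph{Step 3 (statements (2) and (3)).} Put $\sigma_W:=\Id_W\oplus(-\Id_{W^{\perp_\Omega}})$; it lies in $\Sp(\R^{2n+2},\Omega)$ and commutes with $A$ (Step 1), so conjugation by $\sigma_W$ is an involutive inner automorphism of the centraliser of $A$ in $\Sp(\R^{2n+2},\Omega)$ which fixes $\{\exp tA\}$ and the identity component, hence descends to an involution $\tau$ of $G_A$. Since a symplectic map preserving $W$ automatically preserves $W^{\perp_\Omega}$, the stabiliser of $W$ in $\widetilde G_A$ is exactly the $\tau$-fixed subgroup, so the stabiliser $H$ of $S_W$ in $G_A$ satisfies $(G_A^\tau)_0\subseteq H\subseteq G_A^\tau$; hence $G_A\cdot S_W\cong G_A/H$ is a symmetric space --- statement (2). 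Suppose now $A^2\neq0$, so $\lambda\neq0$ and $G_A$ is semisimple ($Sl(n+1,\R)$ or $SU(p+1,n-p)$); write $\g_A=\mathfrak{h}\oplus\mathfrak{m}$ for the $\pm1$-eigenspaces of $\tau$. The centre of $\mathfrak{h}$ is one-dimensional, and a direct computation shows its generator $Z$ has $\ad(Z)$ invertible on $\mathfrak{m}$ (up to a non-zero scalar $\ad(Z)|_{\mathfrak m}$ is a complex structure in case $2$ and a split ``para-complex'' structure in case $1$). Therefore $\omega_N(X,Y):=\kappa(Z,[X,Y])$, $\kappa$ the Killing form of $\g_A$, is a $G_A$-invariant closed non-degenerate $2$-form on $G_A/H$, preserved by the geodesic symmetry (which acts by $-\Id$ on $\mathfrak{m}$); equivalently, $B\cdot S_W\mapsto\Ad(B)Z$ realises $G_A\cdot S_W$ as the adjoint orbit of $Z$ in $\g_A$ with its Kirillov--Kostant--Souriau form. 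Hence $G_A\cdot S_W$ is a symplectic symmetric space --- statement (3). When $\lambda=0$ the group $G_A$ is not semisimple, there is in general no such $Z$, and the orbit of submanifolds need not carry an invariant symplectic form --- whence the restriction $A^2\neq0$ in (3).

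\medskip\noindent\emph{Expected main obstacle.} The hard part will be the finiteness in Step 2 when $A^2=0$. There $\widetilde G_A$ is no longer semisimple --- it is the semidirect product $S\cdot R$ with $R$ solvable --- and a solvable group may have infinitely many orbits on a Grassmannian, so finiteness genuinely uses the rigidity of asking $W$ to be at once $A$-stable and $\Omega$-symplectic. Its substance is a Witt-type extension theorem for the symplectic $\R[A]/(A^2)$-module $\R^{2n+2}$, i.e.\ for a form over the non-semisimple ring $\R[\epsilon]$, to which the classical Witt theorem does not apply; it has to be proved directly from the concrete description of $\widetilde G_A=S\cdot R$ in the Proposition.
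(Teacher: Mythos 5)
Your argument follows essentially the same route as the paper: the reduction to $A$-stable symplectic subspaces $W$ of $\R^{2n+2}$ meeting $\Sigma_A$ gives (1), the involution $\Id_W\oplus(-\Id_{W^{\perp_\Omega}})$ gives (2), and a case-by-case inspection gives (3). You supply considerably more detail than the paper, whose proof of (1) and (3) is essentially deferred to the subsequent (unproved) Proposition listing the orbits; in particular your uniform derivation of the invariant symplectic form on the orbits via the Kirillov--Kostant--Souriau form of the central element $Z\in\mathfrak{h}$, and your explicit flagging of the Witt-type extension problem in the $A^2=0$ case, are welcome additions rather than deviations.
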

  \begin{proof}
  Point (1) follows from the fact that the action of $G_A$  on  the space  of symplectic maximal totally geodesic submanifolds of $M_A$ 
corresponds bijectively to  the action of $G_A$ on the set  of $(2q+2)$-dimensional symplectic subspaces $W$ of $\R^{2n+2}$ which are stable by $A$ and intersect $\Sigma_A=\{ x\in \R^{2n+2}\,\vert \,\Omega(x,Ax)=1\}$.\\
To see point (2), one observes that  given a subspace $W$, the conjugation by $\Id_W\oplus -\Id_{W^{\perp_\Omega}}$ is an automorphism of $\widetilde{G}_A$. It induces an automorphism of $G_A$. The fixed points of this automorphism are  the elements which map $W$ in $W$. \\
Point (3) results from the precise description given below.
  \end{proof}
  We consider the  {{ Radon transform}} described in the introduction,  choosing  one orbit $N$ of the automorphism group $G_A$ in the set of symplectic maximal totally geodesic submanifolds of $M_A$. 
One associates to a continuous function $f$ on $M_A$, with  compact support,   the function $\Rad{f}$ on $N$ defined by
  $
  \Rad{f}(S)=\int_{x\in S} f(x) d\mu (x)
  $
  with $d\mu$ an invariant measure on the totally geodesic submanifold $S$ (which exists since $S$ is a symplectic space form).
 The {dual Radon transform }associates to a continuous function $F$ on $N$, with compact support, the function $\Rad^* F$ on $M_A$ ,
   $
  \Rad^*{F}(x)=\int_{S\ni x} F(S) d\nu (S)
  $
  with $d\nu$ an invariant measure on $N$ (which can be shown to exist from the explicit description below).\\
  The spaces $M_A$ and $N$ which are in such Radon-type duality are  the following :
\begin{proposition}
The $G_A$-orbits of maximal totally geodesic symplectic submanifolds in the models spaces $M_A$, when $A^2\neq 0$, are given as follows. 
 \begin{itemize}
 \item When $A^2=k^2\Id$, we view as before $\R^{2n+2}=L_+\oplus L_-$ as a sum of two Lagrangian subspaces  corresponding to the $\pm k$ eigenspaces for $A$.
 The group  $G_A$ is $Sl(n+1,\R)$ and the model space form is  the cotangent bundle to the sphere with its canonical symplectic structure
 $$M=T^*S^n =\raisebox{0.3mm}{$Sl(n+1,\R)$}/\raisebox{-0.3mm}{$Gl_+(n,\R)$}.$$ 
Any symplectic maximal totally geodesic submanifold of dimension $2q$ is diffeomorphic to $$T^*S^q.$$
 All such submanifolds are in the same orbit of $Sl(n+1,\R)$;  this orbit is given by 
$$N_q=\raisebox{0.3mm}{$Sl(n+1,\R)$} / \raisebox{-0.3mm}{$S(Gl(q+1,\R)\times Gl(n-q,\R))$},$$ i.e. the space of pairs of supplementary spaces  in $\R^{n+1}$, with one of the spaces of dimension $q+1$. It is a symmetric symplectic space.

\item If  $A^2=-k^2\id$, we view as before $A=kJ$ with $J$ a complex structure and  we identify $\R^{2n+2}$ to $\C^{n+1}$ which is endowed
with the Hermitian  structure $\langle u,v \rangle= \Omega(u,Jv)-i\Omega(u,v)$ with complex  signature $(p+1,n-p)$. 
\begin{enumerate}
\item When $p=n$,  the group $G_A=SU(n+1)$ and the model  space form is the complex projective space 
     $$M=P_n(\C)=\raisebox{0.3mm}{$SU(n+1)$}/\raisebox{-0.3mm}{$U(n)$}.$$  
Every symplectic maximal totally geodesic submanifold of real dimension  $2q$ is diffeomorphic to $$P_q(\C).$$ There is only one $SU(n+1)$-orbit of symplectic maximal totally geodesic submanifolds for a given  dimension $2q$; it is given by 
      $$N_q=\raisebox{0.3mm}{$SU(n+1)$}/\raisebox{-0.3mm}{$S(U(q+1)\times U(n-q))$}.$$ 
The Radon transform in the case where  $q=n-1$ corresponds to the transform defined by antipodal submanifolds in $P_n(\C)$ (see \cite{sgbib:Helgason}).
\item When $1<p<n$, the group $G_A$ is $SU(p+1,n-p)$ and the model space form is 
        $$M=\raisebox{0.3mm}{$SU(p+1,n-p)$}/\raisebox{-0.3mm}{$U(p,n-p)$}.$$
Any symplectic maximal   totally geodesic submanifold of real dimension  $2q$ is of the form 
      $$ \raisebox{0.3mm}{$SU(p'+1,q-p')$}/\raisebox{-0.3mm}{$U(p',q-p')$}$$  
 for $p'<min(p,q)$. There is only one $SU(n+1)$-orbit of  symplectic maximal totally geodesic symplectic submanifolds for a given  dimension $2q$ and a given $p'$, $N_{q,p'}$ which is the symmetric symplectic space  given by
 $$
 N_{q,p'}=\raisebox{0.3mm}{$SU(p+1,n-p)$}/\raisebox{-0.3mm}{$S\left(U(p'+1,q-p')\times U(p-p',n-p-(q-p'))\right)$}.
 $$
\item When $p=0$, the group $G_A$ is $SU(1,n)$ and the model space form is  the complex hyperbolic space 
           $$M=\raisebox{0.3mm}{$SU(1,n)$}/\raisebox{-0.3mm}{$U(n)$}=H_n(\C).$$   
 Every symplectic totally geodesic submanifold of dimension  $2q$ is diffeomorphic  to $$H_q(\C).$$  
 There is one  $SU(1,n)$-orbit of  symplectic maximal totally geodesic symplectic submanifolds of a given dimension $2q$; it is given by 
            $$N=\raisebox{0.3mm}{$SU(1,n)$}/\raisebox{-0.3mm}{$S(U(1,q)\times U(n-q))$}.$$
  The Radon transform in the case where  $q=n-1$ corresponds to the Radon transform defined in the  Riemannian framework  with totally geodesic complex hypersurfaces in $H_n(\C)$  (see again \cite{sgbib:Helgason}).
  \end{enumerate}
 \item When $A^2=0$, we use as before the decomposition 
$$\R^{2n+2} = (V \oplus V') \oplus^{\perp_\Omega} W',$$
where $V = \Image (A)$, $V'$ is a $\Omega$-isotropic subspace supplementary to $\Ker (A)$ et $W'$ is the symplectic subpsace $\Omega$-orthogonal to $V \oplus V'$. The group $\tilde{G_A}$ is the following semi-direct product
$$S \cdot R = (SO_0(p,r-p)\times Sp(2m, \R)) \cdot (Mat(r\times 2m,\R)\cdot Sym(r,\R)),$$
and the model space is the symplectic product of a cotangent bundle, endowed with its canonical symplectic structure, and a symplectic vector space :
$$ M^{red} = T^*Q^{p,r-p}\times W = \raisebox{0.5mm}{$(S\cdot R)$}/ \raisebox{-0.5mm}{$(S' \cdot R')$},$$
where 
\begin{align*}
&S' = (SO_0(p-1,r-p)\times Sp(2m, \R)),\\
&R' = (Mat((r-1)\times 2m,\R)\cdot (\R\times Sym((r-1),\R))).
\end{align*}
Any maximal symplectic totally geodesic submanifold of dimension $2q$ is diffeomorphic to
$$T^*Q^{p', r'-p'} \times W'.$$
There is only one $(S \cdot R)$-orbit of symplectic maximal totally geodesic submanifolds for a given dimension $2q$, a given rank $r'$ and a given signature $p'$, denoted by $N_{q, r', p'}$, which is the following symmetric space
$$N_{q,r',p'} = \raisebox{0.5mm}{$(S\cdot R)$}/ \raisebox{-0.5mm}{$(S'' \cdot R'')$},$$
where 
\begin{align*}
&S'' = S(O(p',r'-p') \times O (p-p', (r-r')-(p-p')))_0\\
&\quad\quad \times (Sp(2m',\R) \times Sp (2(m - m'), \R)),\\
&R'' = (Mat(r'\times 2m',\R) \times Mat ((r-r') \times 2(m-m')))\\
&\quad \quad \cdot (Sym (r',\R) \times Sym(r-r',\R)),
\end{align*}
with $2m' := 2q-2r'+2$.

\end{itemize}
\end{proposition}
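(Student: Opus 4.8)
The plan is to reduce the whole statement to a classification of $A$-stable symplectic linear subspaces of $\R^{2n+2}$ under the action of $\widetilde{G}_A$, and then to translate back. Combining the preceding two theorems, the assignment $W\mapsto S_W:=(\Sigma_A\cap W)/\{\exp tA\}$ is a $G_A$-equivariant bijection between the $(2q+2)$-dimensional $A$-stable symplectic subspaces $W\subset\R^{2n+2}$ meeting $\Sigma_A$ and the maximal totally geodesic symplectic submanifolds of $M_A$ of dimension $2q$. Since $\exp tA$ preserves every $A$-stable subspace, $\{\exp tA\}\subset\mathrm{Stab}_{\widetilde{G}_A}(W)$, so a $G_A$-orbit of submanifolds is exactly a $\widetilde{G}_A$-orbit of such subspaces, presented as $\widetilde{G}_A/\mathrm{Stab}_{\widetilde{G}_A}(W)=G_A/(\mathrm{Stab}_{\widetilde{G}_A}(W)/\{\exp tA\})$. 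Finally $(W,\Omega|_W,A|_W)$ is again a datum of the type of the previous Proposition, with $(A|_W)^2=\lambda\,\Id_W$, so $S_W$ is the associated model space form and is read off from that Proposition. Hence in each of the three cases it remains to (a) list the $\widetilde{G}_A$-orbits of admissible $W$ with their discrete invariants, (b) compute the stabilizer of a standard representative, (c) identify $S_W$.

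In Case 1 ($A^2=k^2\Id$) an $A$-stable subspace splits as $W=(W\cap L_+)\oplus(W\cap L_-)$ with $L_\pm$ Lagrangian; $W$ is symplectic iff the $\Omega$-pairing of $W\cap L_+$ with $W\cap L_-$ is nondegenerate, which forces $\dim(W\cap L_\pm)=q+1$, is equivalent to a pair of complementary subspaces of $\R^{n+1}$ of dimensions $q+1$ and $n-q$, and (being nonzero) already gives $W\cap\Sigma_A\neq\emptyset$. Thus $Gl_+(n+1,\R)$ has a single orbit, with stabilizer $S(Gl(q+1,\R)\times Gl(n-q,\R))$ in $G_A=Sl(n+1,\R)$, and Case 1 of the Proposition identifies $S_W\simeq T^*S^q$. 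In Case 2 ($A=kJ$) an $A$-stable subspace is a complex subspace; it is symplectic iff the Hermitian form restricts nondegenerately, the invariant being its Hermitian signature, written $(p'+1,q-p')$ so that the $\Omega$-orthogonal complement has signature $(p-p',n-p-(q-p'))$; meeting $\Sigma_A$ requires a vector of the appropriate sign, encoded by the index $+1$. Witt's theorem for Hermitian forms then gives exactly one $\widetilde{G}_A=U(p+1,n-p)$-orbit for each admissible signature, with stabilizer $U(p'+1,q-p')\times U(p-p',n-p-(q-p'))$, whence $N_{q,p'}=SU(p+1,n-p)/S(U(p'+1,q-p')\times U(p-p',n-p-(q-p')))$ after passing to $G_A$; the earlier list identifies $S_W$ as the corresponding $SU/U$ model, i.e. $P_q(\C)$, $H_q(\C)$, or a holomorphic vector bundle over $P_{p'}(\C)$, and for $q=n-1$ this recovers Helgason's antipodal-submanifold transform on $P_n(\C)$, resp. his complex-hypersurface transform on $H_n(\C)$.

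Case 3 ($A^2=0$) is the substantial one. One first checks that an $A$-stable symplectic $W$ inherits the three-step structure of $\R^{2n+2}$: $\widetilde{V}:=AW$ is $\Omega$-isotropic, $W\cap\Ker A$ equals its $\Omega$-orthogonal inside $W$, the symmetric form $x\mapsto\Omega(x,Ax)$ descends to a nondegenerate form of some signature $(p',r'-p')$ on $W/(W\cap\Ker A)$ with $r'=\mathrm{rank}(A|_W)$, and $\Omega$ descends to a nondegenerate symplectic form of some dimension $2m'$ on $(W\cap\Ker A)/\widetilde{V}$; counting dimensions forces $2m'=2q-2r'+2$, and meeting $\Sigma_A$ forces $p'\geq1$. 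The core of the argument is a simultaneous Witt-type normalization: using the explicit matrix form of $\widetilde{G}_A=S\cdot R$ displayed above one brings a given such $W$ to the coordinate model $(\widetilde{V}_0\oplus\widetilde{V}_0')\oplus^{\perp_\Omega}\widetilde{W}_0'$, first normalizing the flag $\widetilde{V}\subset W\cap\Ker A\subset W$ using the solvable factor $R$ together with the $SO_0(p,r-p)\times Sp(2m,\R)$-part, then the symmetric form by ordinary Witt inside $SO_0(p,r-p)$, then the symplectic part to Darboux form inside $Sp(2m,\R)$, and verifying that $(q,r',p')$ is a complete invariant. Determining which elements of $S\cdot R$ preserve this flag and map a base point of type $(0,e_1,0)$ to one of type $(te_1,e_1,0)$ yields the stabilizer $S''\cdot R''$ as written; passing to $G_A$ and applying Case 3 of the Proposition to $(W,\Omega|_W,A|_W)$ identifies $S_W\simeq T^*Q^{p',r'-p'}\times\widetilde{W}'$.

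The main obstacle is exactly Case 3: the three bilinear structures --- the nilpotent $A$, the symmetric form on $V$, the symplectic form on $W'$ --- have to be normalized in a compatible order under the non-reductive group $S\cdot R$, and at each stage one must check that the stabilizer of the part already normalized still acts transitively on the next invariant. The solvable factor $R$ enters nontrivially both in the transitivity (it is what moves $\widetilde{V}$ into $V$) and in the stabilizer (the $Mat\times Mat$ and $Sym\times Sym$ blocks of $R''$, together with the extra $\R$ factor coming from the relation $Se_1=te_1$). Cases 1 and 2 are by contrast routine applications of the classical Witt theorems for a split, resp. Hermitian, form; the symplecticity of the orbits $N$ when $A^2\neq0$ is then visible because the previous Proposition exhibits each of them inside $\mathfrak{sl}(n+1,\R)$ or $\mathfrak{su}(p+1,n-p)$ as a (co)adjoint orbit carrying its Kostant--Kirillov--Souriau symplectic form.
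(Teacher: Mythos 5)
Your reduction of the whole statement to the $\widetilde{G}_A$-classification of $(2q+2)$-dimensional $A$-stable symplectic subspaces meeting $\Sigma_A$, followed by Witt-type transitivity arguments and stabilizer computations, is exactly the route the paper intends: that bijection is established in the preceding theorem, and the proposition itself is then stated in the paper with no further proof. Your write-up is correct and in fact supplies more detail than the paper does --- in particular the identification of the complete set of invariants $(q,r',p')$ and the staged normal-form argument under the non-reductive group $S\cdot R$ in the nilpotent case, both of which the paper leaves entirely implicit.
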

\section{Characterization in terms of totally geodesic submanifolds}\label{characterization}
We have seen in the previous section that our models $\Sigma_A$ of space forms admit as many symplectic maximal totally geodesic submanifolds as possible.
Reciprocally, we have :
\begin{theorem}
Let $(M,\omega)$ be a symplectic manifold of dimension $2n\ge 8$. Assume $(M,\Omega)$ to be endowed with a symplectic connection $\nabla$ and with a smooth parallel field $A$ of endomorphisms of the tangent bundle such that, $A^2=\lambda\Id$ with $\lambda$ constant and, for all $y\in M$, $A_y\in {\mathfrak{sp}}(T_yM,\omega_y)$. We assume here that the rank of $A$ is at least $3$.\\ 
Assume that, for any point $y\in M$ and any symplectic $A_y$-stable subspace $V$ of $T_yM$, there exists a totally geodesic submanifold $S$ of $M$ such that $y\in S$ and $T_yS=V$. Then $(M,\omega,\nabla)$ is locally symmetric and its canonical connection is of Ricci-type. 
\end {theorem}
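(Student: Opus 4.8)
The plan is to reduce the statement to pointwise algebra on the tangent spaces and then to reassemble the pieces. The starting point is the elementary fact, already used above, that a totally geodesic submanifold of $(M,\nabla)$ is auto-parallel: if $S\subset M$ is totally geodesic with $y\in S$ and $T_yS=V$, then the second fundamental form of $S$ vanishes, $\nabla$ restricts along $S$ to the intrinsic connection of $S$, and hence
\[
R^\nabla_y(X,Y)Z\in V,\qquad (\nabla_XR^\nabla)_y(Y,Z)T\in V\qquad\text{for all }X,Y,Z,T\in V .
\]
Moreover, $V=T_yS$ being $A_y$-stable and $A$ being parallel, parallel transport along $S$ (which preserves both $TS$ and $A$) shows that $A$ restricts to a parallel field $A|_S\in\mathfrak{sp}(TS,\omega|_S)$ with $(A|_S)^2=\lambda\,\Id$; in particular $R^\nabla_y|_V$ is itself a symplectic curvature tensor on $V$. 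Thus the hypothesis delivers, at each $y$: \emph{for every $A_y$-stable symplectic subspace $V\subseteq T_yM$, both $R^\nabla_y$ and $\nabla R^\nabla$ preserve $V$} in the above sense.

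The first, and main, step is the algebraic lemma: \emph{if $R$ is a symplectic curvature tensor on a symplectic vector space $(W,\omega)$ of dimension $2n\ge 8$, if $A\in\mathfrak{sp}(W,\omega)$ with $A^2=\lambda\,\Id$ has rank $\ge 3$, and if $R(X,Y)Z\in V$ for every $A$-stable symplectic subspace $V$ and all $X,Y,Z\in V$, then $R$ is of Ricci type, i.e.\ $W^\nabla=0$ in \eqref{eq:decR}.} I would prove it in three steps. (1) For an $A$-stable symplectic $V$ the orthogonal $V^{\perp_\omega}$ is again $A$-stable symplectic, and the preservation hypothesis applied to both $V$ and $V^{\perp_\omega}$, together with the curvature symmetries, gives $R(V,V)V\subseteq V$ and $R(V^{\perp_\omega},V^{\perp_\omega})V\subseteq V$; writing $\rho=\sum_k R(e_k,f_k)$ for a symplectic basis $\{e_k,f_k\}$ of $W$ adapted to $V\oplus V^{\perp_\omega}$, it follows that the Ricci endomorphism $\rho$ of $R$ also preserves every such $V$. (2) Writing $E(\sigma)$ for the tensor obtained from $\sigma\in\mathfrak{sp}(W)$ by the right-hand side of \eqref{eq:RRiccitype}, one checks directly that $E(\sigma)$ preserves $V$ as soon as $\sigma$ does; hence $E(\rho)$, and therefore the traceless part $W^\nabla=R-E(\rho)$, preserves every $A$-stable symplectic $V$. (3) It remains to show that a traceless symplectic curvature tensor preserving every $A$-stable symplectic subspace vanishes; this is the computational heart. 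I would fix a symplectic basis adapted to the normal form of $A$ given in the Proposition of Section~\ref{symplectic space forms} and treat the three cases $\lambda>0,\ \lambda<0,\ \lambda=0$ separately, evaluating the preservation condition on enough low-dimensional $A$-stable symplectic subspaces --- the graded subspaces $P\oplus Q$ with $P\subseteq L_+$, $Q\subseteq L_-$ when $\lambda>0$, the $J$-complex non-degenerate subspaces when $\lambda<0$, the subspaces adapted to the flag $\Image A\subseteq\Ker A$ when $\lambda=0$ --- and combining these with tracelessness and the Bianchi identity to force all components to vanish. The hypotheses $2n\ge 8$ and $\mathrm{rank}\,A\ge 3$ are exactly what guarantee that enough such subspaces exist; when $\lambda\ne 0$, $A$ is invertible, so the rank condition is automatic and only matters when $\lambda=0$.

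Granting the lemma, $R^\nabla_y=E(\rho^\nabla_y)$ everywhere, so $(M,\omega,\nabla)$ is of Ricci type and $R^\nabla=E(\rho^\nabla)$ globally; since $\omega$ is parallel, $\nabla_XR^\nabla=E(\nabla_X\rho^\nabla)$. The second Bianchi identity $\cyclic_{XYZ}(\nabla_XR^\nabla)(Y,Z)=0$, which holds automatically, then forces $\nabla\rho^\nabla$ into the restricted form characteristic of Ricci-type connections (part of their structure theory, cf.\ \cite{sgbib:CGR,sgbib:CGS}): there is a locally defined vector field $u$ with
\[
(\nabla_X\rho^\nabla)(Y)=\alpha\bigl(\omega(X,Y)\,u+\omega(u,Y)\,X\bigr),\qquad \alpha\ne 0\ \text{a universal constant,}
\]
and $\nabla$ is locally symmetric precisely when $u\equiv 0$. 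Now use the second half of the localized hypothesis: from $(\nabla_XR^\nabla)_y(Y,Z)T=E\bigl((\nabla_X\rho^\nabla)_y\bigr)(Y,Z)T\in V$ for $X,Y,Z,T\in V$ one deduces, exactly as in step (2) of the lemma, that $(\nabla_X\rho^\nabla)_y$ preserves every $A_y$-stable symplectic $V$ containing $X$. Inserting the displayed form, the summand $\omega(u,Y)X$ already lies in $V$, so $\alpha\,\omega(X,Y)\,u\in V$ for all $Y\in V$; choosing $Y\in V$ with $\omega(X,Y)\ne 0$ yields $u_y\in V$. Since the $A_y$-stable symplectic subspaces are so abundant that their intersection reduces to $\{0\}$ (for each $0\ne w$ one exhibits such a subspace missing $w$), this gives $u_y=0$ at every $y$. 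Hence $\nabla\rho^\nabla=0$, so $\nabla R^\nabla=E(\nabla\rho^\nabla)=0$: the connection is locally symmetric, and of Ricci type by the lemma.

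The main obstacle is step (3) of the algebraic lemma, especially the case $\lambda=0$: there $A$ is not invertible, the $A$-stable symplectic subspaces are more rigid, and one must carefully keep track of which subspaces respect the flag $\Image A\subseteq\Ker A$ --- this is where the assumption $\mathrm{rank}\,A\ge 3$ is essential --- and combine enough of them to pin down the curvature; the cases $\lambda\ne 0$ are comparatively transparent, the $\pm k$-eigenspace decomposition (respectively the complex structure $J=A/k$) providing a clean and plentiful supply of $A$-stable symplectic subspaces.
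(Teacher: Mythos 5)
Your overall architecture is reasonable and parallels the paper's in outline (reduce to pointwise algebra on each tangent space, prove the curvature is of Ricci type, then deduce local symmetry), but as written the proof has a genuine gap: step (3) of your algebraic lemma --- that a traceless symplectic curvature tensor preserving every $A$-stable symplectic subspace vanishes --- is exactly where the entire content of the theorem lies, and you only announce a plan for it (``I would fix a basis \dots{} and combine enough subspaces'') rather than carry it out. Your steps (1) and (2) do not reduce the difficulty: splitting $R=W^\nabla+E(\rho)$ and checking that $\rho$ and $E(\rho)$ preserve the $A$-stable subspaces merely transfers the whole problem onto $W^\nabla$, and the deferred case analysis (especially $\lambda=0$, where $A$ is nilpotent and the stable subspaces are constrained by $\Image A\subseteq\Ker A$) is precisely the computation that must be exhibited. (Also, your claim in step (1) that $\rho$ preserves $V$ needs the symplectic symmetry of $R$ applied to $V^{\perp_\omega}$ plus a careful trace computation in an adapted basis; it is not immediate from $R(V,V)V\subseteq V$ and $R(V^{\perp_\omega},V^{\perp_\omega})V\subseteq V$ alone.)

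For comparison, the paper avoids the decomposition altogether and uses a single family of subspaces: for generic $X,Y,Z\in T_yM$ the six vectors $X,Y,Z,A_yX,A_yY,A_yZ$ are linearly independent and span an $A_y$-stable symplectic subspace (this is where $2n\ge 8$ and the hypothesis $\mathrm{rank}\,A\ge 3$ enter), so the hypothesis forces $R^\nabla_y(X,Y)Z$ into their span for a dense open set of triples; a multilinearity argument then yields $R^\nabla_y(X,Y)Z=\alpha(Y,Z)X+\beta(X,Z)Y+\gamma(X,Y)Z+\alpha'(Y,Z)A_yX+\beta'(X,Z)A_yY+\gamma'(X,Y)A_yZ$ with bilinear coefficients, and the curvature identities (antisymmetry, first Bianchi, $[R^\nabla_y(X,Y),A_y]=0$ from $\nabla A=0$, and the symmetry of $\omega(R(X,Y)Z,T)$ in $Z,T$) force $\alpha'=f\omega$, hence Ricci type with $\rho^\nabla$ proportional to $A$. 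Local symmetry then comes essentially for free: $A$ is parallel and the second Bianchi identity makes the scalar $f$ constant, so $\nabla R^\nabla=0$. Your endgame via the vector field $u$ in $\nabla_X\rho^\nabla$ is workable in spirit, but it imports nontrivial structure theory from \cite{sgbib:CGR,sgbib:CGS} where the paper's route is self-contained. To repair the proof you must either actually execute your step (3) in all three cases or replace the whole lemma by the span-of-$\{X,Y,Z,AX,AY,AZ\}$ argument.
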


\begin{proof}
Since the set of triples of tangent vectors to a point $y \in M$, $X, Y, Z \in T_y M$, such that $X, Y, Z, A_y X, A_y Y, A_y Z$ are linearly independent and span a symplectic $A_y$--stable subspace of  $T_y M$, is a dense open subset of $(T_y M)^3$, the condition implies that the curvature is given by
{\scriptsize{$$
R^\nabla_y(X,Y)Z=\alpha_y(Y,Z)X+\beta_y(X,Z)Y+\gamma_y(X,Y)Z+\alpha'_y(Y,Z)A_yX+\beta'_y(X,Z)A_yY+\gamma'_y(X,Y)A_yZ
$$}}
with $\alpha,\beta,\gamma,\alpha',\beta',\gamma'$ $2$-forms on $M$.\\
Now $R^\nabla_y(X,Y)=-R^\nabla_y(Y,X)$ and $\cyclic_{XYZ}R^\nabla_y(X,Y)Z=0$ imply 
{{$$
\beta=-\alpha,  \gamma(X,Y)=-\alpha(X,Y)+\alpha(Y,X),  \beta'=-\alpha', \gamma'(X,Y)=-\alpha'(X,Y)+\alpha'(Y,X)
$$}}
and the fact that $R^\nabla_y(X,Y)$ commutes with $A_y$ since $\nabla A=0$ implies $$\alpha(Y,Z)=\alpha'(Y,AZ).$$
The connexion $\nabla$ being symplectic, we also have $\omega(R^\nabla(X,Y)Z,T)=\omega(R^\nabla(X,Y)T,Z)$, i.e.
\begin{eqnarray*}
&&\alpha'(Y,AZ)\omega(X,T)-\alpha'(X,AZ)\omega(Y,T)+\bigl(\alpha'(Y,AX)-\alpha'(X,AY)\bigr)\omega(Z,T)\\
&&+\alpha'(Y,Z)\omega(AX,T)-\alpha'(X,Z)\omega(AY,T)+\bigl(\alpha'(Y,X)-\alpha'(X,Y)\bigr)\omega(AZ,T)
\end{eqnarray*}
is symmetric in $Z,T$, so that 
\begin{eqnarray}
&&\alpha'(Y,AZ)\omega(X,T)-\alpha'(X,AZ)\omega(Y,T)+2\bigl(\alpha'(Y,AX)-\alpha'(X,AY)\bigr)\omega(Z,T)\nonumber
\\
&&+\alpha'(Y,Z)\omega(AX,T)-\alpha'(X,Z)\omega(AY,T)-\alpha'(Y,AT)\omega(X,Z)\nonumber\\
&&+\alpha'(X,AT)\omega(Y,Z)-\alpha'(Y,T)\omega(AX,Z)+\alpha'(X,T)\omega(AY,Z)=0.\label{rel}
\end{eqnarray}
Choosing $T=AX$ and $X$ $\omega$-orthogonal to $Y,AY,Z,AZ$ yields $$\alpha'(Y,AZ)\omega(X,AX)=\alpha'(X,AX)\omega(Y,AZ).$$
Thus $\alpha'(Y,AZ)=f\omega(Y,AZ)$ and relation (\ref{rel}) becomes
\begin{eqnarray*}
&&f\omega(Y,AZ)\omega(X,T)-f\omega(X,AZ)\omega(Y,T)\\
&&+\alpha'(Y,Z)\omega(AX,T)-\alpha'(X,Z)\omega(AY,T)-f\omega(Y,AT)\omega(X,Z)\\
&&+f\omega(X,AT)\omega(Y,Z)-\alpha'(Y,T)\omega(AX,Z)+\alpha'(X,T)\omega(AY,Z)=0.
\end{eqnarray*}
Choosing $T=X$ and $X$ $\omega$-orthogonal to $Y,AY,Z,AZ$ yields
$$\alpha'(Y,Z)\omega(AX,X)+f\omega(X,AX)\omega(Y,Z)+\alpha'(X,X)\omega(AY,Z)=0$$
hence $\alpha'(Y,Z)=f\omega(Y,Z)+g\omega(Y,AZ)$ and relation (\ref{rel}) becomes
$$2g\bigl(\omega(Y,AZ)\omega(AX,T)-\omega(X,AZ)\omega(AY,T)\bigr)=0,
$$
hence $g=0$, so that $\alpha'(Y,Z)=f\omega(Y,Z)$ and 
\begin{eqnarray*}
R^\nabla_y(X,Y)Z&=&f(y)\bigl(\omega_y(Y,AZ)X-\omega_y(X,AZ)Y\bigr.\\
&&\bigl.+\omega_y(Y,Z)A_yX-\omega_y(X,Z)A_yY+2\omega_y(X,Y)A_yZ\bigr)
\end{eqnarray*}
which states that the connection is of Ricci-type. Bianchi's second identity implies that $f$ is a constant and the result follows.
\end{proof}

\section*{Acknowledgments}

We thank the organizers of the XXIV IFWGP meeting in Zaragoza for asking us to publish an expanded version of SG's talk.
This research  was partially supported by the  ARC  ``Complex, symplectic and contact
geometry, quantization and interactions'' of the communaut\'e fran\c caise de Belgique and by the PAI ``Dygest'' du gouvernement f\'ed\'eral de Belgique.
TG thanks the FRS-FNRS for support through a ``mandat d'Aspirant''.


\begin{thebibliography}{0}



\bibitem{sgbib:BC} P.~Baguis, M.~Cahen, 
{{A construction of symplectic
connections through reduction}}, 
\textit{Lett. Math. Phys.} \textbf{57} (2001), pp. 149-160.

\bibitem{sgbib:BCG}P. Bieliavsky, M. Cahen, S. Gutt ,
Symmetric symplectic manifolds and deformation quantization, Math. 
Phys. Studies 18, Kl\"uwer 1995, 63--75.

\bibitem{sgbib:Biel} P.~Bieliavsky, Symplectic Symmetric Spaces, th\`ese de doctorat, ULB, 1995, arXiv:math/0703358.

\bibitem{sgbib:CGR}
M.~Cahen, S.~Gutt, J.~Rawnsley, Symmetric
symplectic spaces with Ricci-type curvature, in  {\textit{ Conf\'erence Mosh\'e
 Flato 1999}} vol 2, G. Dito et D. Sternheimer (eds),
Math. Phys. Studies 22 (2000) 81--91.


\bibitem{sgbib:CGS}
M.~Cahen, S.~Gutt, L.~Schwachh\"ofer,
Construction of Ricci-type connections by reduction and induction,
 in {\textit{The breadth of symplectic and Poisson Geometry}},
Marsden, J.E. and Ratiu, T.S. (eds),
Progress in Math 232,
Birkhauser, 2004. (math.DG/0310375)


\bibitem{sgbib:CGW}
M.~Cahen, S.~Gutt, S.~Waldmann,  Symplectic Connections of Ricci Type and Star Products, preprint  2007,   in \emph{Higher Structure in Geometry and Physics:   in Honor of M. Gerstenhaber and J. Stasheff}, edited by Alberto Cattaneo, Anthony  Giaquinto and Ping Xu, Progress in Math. 287, Birkh\"auser, 2010, 93--110.

\bibitem{sgbib:gutt} S.~Gutt, Symplectic Space forms and submanifolds, preprint 2015 written for the proceedings of the meeting``Geometry of Jets and Fields" held in Bedlewo in May 2015.

\bibitem{sgbib:Helg1} S.~Helgason, {\textit{The  Radon transform}}, progress in Mathematics, vol 5, Birkh\"auser, 1999.

\bibitem{sgbib:Helgason} Sigurdur Helgason, {\textit{Integral Geometry and Radon Transforms}}, Springer, 2010.

\bibitem{sgbib:Loos} O.~Loos, {\textit{Symmetric spaces I, II}},  Benjamin, New York, 1969.


\bibitem{sgbib:Richard} N.~Richard, Extrinsic  Symmetric Symplectic Spaces, th\`ese de doctorat, ULB, 2010.

\bibitem{sgbib:Vaisman}  I.~Vaisman,
Symplectic Curvature Tensors,
\textit{Monats. Math.} \textbf{100} (1985) 299--327.
See also: M.~De~Visher, M\'emoire de licence, Bruxelles, 1999. 


\end{thebibliography}
\end{document}